\newtheorem{theorem}{THEOREM}[section]
\newtheorem{lemma}[theorem]{LEMMA}
\newtheorem{corollary}[theorem]{COROLLARY}
\newtheorem{example}[theorem]{EXAMPLE}
\newtheorem{definition}[theorem]{DEFINITION}
\newtheorem{remark}[theorem]{REMARK}
\newcommand{\mov}{\textnormal{Mov}} 
\newcommand{\fix}{\textnormal{Fix}} 
\title{Latin bitrades derived from groups}
\author{Nicholas J. Cavenagh\\
School of Mathematics \\
The University of New South Wales \\
NSW 2052 Australia
\\
\\
Ale\v s Dr\'apal \footnote{This work was supported by Australian Research Council Linkage International Award LX0453416 and institutional grant MSM0021620839}
\\
Department of Mathematics \\
Charles University \\
Sokolovsk\'a 83, 186 75 Praha 8 \\
Czech Republic
\\
\\
Carlo H\"{a}m\"{a}l\"{a}inen \\
Department of Mathematics \\
The University of Queensland \\
QLD 4072 Australia
}
\date{}
\begin{document}

\maketitle

\begin{abstract} 
A latin bitrade is a pair of
partial latin squares which are disjoint, occupy the same set of non-empty cells, and whose corresponding rows and columns contain the same set of entries. 
In (\cite{Dr9}) it is shown that a latin bitrade 
may be thought of as three 
derangements of the same set, 
whose product is the identity and whose cycles pairwise 
have at most one point in common. 
By letting a group act on itself by right translation,
we show how some latin bitrades may be derived directly from groups. 
Properties of latin bitrades such as
homogeneity, minimality (via thinness) and orthogonality may also
be encoded succinctly within the group structure.  
We apply the construction to some well-known groups, 
constructing previously unknown latin bitrades. 
In particular, we show the existence of minimal, $k$-homogeneous latin bitrades for each odd $k\geq 3$. In some cases these are the smallest known such examples.  
\end{abstract}

\section{Introduction}\label{s:intro}
One of the earliest studies of latin bitrades appeared in \cite{DrKe1},
where they are referred to as {\em exchangeable partial groupoids}.
Later (and at first independently), latin bitrades became of interest to
researchers of critical sets (minimal defining sets of latin squares)
(\cite{codose},\cite{Ke2},\cite{BaRe2}) and of the intersections
between latin squares (\cite{Fu}).  As discussed in \cite{wanless},
latin bitrades may be applied to the compact storage of large catalogues
of latin squares.  Results on other kinds of combinatorial trades
may be found in \cite{St} and \cite{KhMa2}.

In \cite{Dr9} it is shown that a latin bitrade may be thought of
as
a set of three permutations with no fixed points, whose product
is the identity and whose cycles have pairwise at most one point
in common. By letting a group act on itself by right translation, in
this paper we extend this  result to give a construction of
latin bitrades directly from groups. This construction does not give
every type of latin bitrade, however the latin bitrades generated in
this way are rich in symmetry and structure. Furthermore latin bitrade
properties such as orthogonality, minimality and homogeneity may
be encoded concisely into the group structure, as shown in Section 3.
Section 4 shows that many interesting examples can be constructed,
even from familiar examples of groups. Finally in Section 5 we give a table of known results of minimal $k$-homogeneous latin bitrades for small, odd values of $k$. 

Note that throughout this paper we compose permutations from left to
right. Correspondingly, if a permutation $\rho$ acts on a point $x$,
$x\rho$ denotes the image of $x$.  
Given a group $G$ acting on a set $X$, for each $g\in G$, 
$\fix(g)=\{x\mid x\in X,xg=g\}$ and 
$\mov(g)=\{x\mid x\in X,xg\neq g\}$. 
The group theory notation used
in this paper is consistent with most introductory texts, including
\cite{hall}.

\section{Permutation Structure}

\begin{definition}\label{defnLatinSquare}
Let $A_1$, $A_2$, and $A_3$ be finite, non-empty sets.
A {\em partial latin square\/} $T$
is an $|A_1| \times |A_2|$ array with
rows indexed by $A_1$, columns indexed by $A_2$,
and entries from $A_3$, such that 
each $e \in A_3$ appears at most once in each row
and at most once in
each column.
In this paper, we ignore unused rows, columns and symbols, so that 
$A_1$, $A_2$ and $A_3$ often have differing sizes. 
In the case where $|A_1|=|A_2|=|A_3|=n$ and 
each $e\in A_3$ appears exactly once in each row
and once in each column, we say that $T$ is a 
{\em latin square\/} of order $n$.
\end{definition}
We may view $T$ as a set and write $(x,\, y,\, z) \in T$
if and only if symbol $z$ appears in the cell at row $x$, column $y$.
As a binary operation we write
$x \circ y = z$ if and only if $(x,\, y,\, z) \in T(=T^{\circ})$.
Equivalently, a partial latin square $T^{\circ}$ is a subset
$T^{\circ} \subseteq A_1 \times A_2 \times A_3$ such that the following
conditions are satisfied:
\begin{itemize}
	\item[(P1)]
	If $(a_1,a_2,a_3),(b_1,b_2,b_3)\in T^{\circ}$, then either at most one of 
	$a_1=b_1$, $a_2=b_2$ and $a_3=b_3$ is true or all three are true.  
	\item[(P2)]
	The sets $A_1$, $A_2$, and $A_3$ are pairwise disjoint, and 
	for all $\alpha \in \bigcup_i A_i$, there exists an
	$(a_1, a_2, a_3) \in T^{\circ}$ with $\alpha=a_i$ for some $i$.
\end{itemize}
Let $T^{\circ},T^{\star}\subset A_1\times A_2\times A_3$ be two partial latin squares. 
Then $(T^{\circ},T^{\star})$ is called a {\em latin bitrade}
if the following 
conditions are all satisfied.
\begin{itemize}
\item[(R1)] $T^{\circ} \cap T^{\star} = \emptyset$.

\item[(R2)] For all $(a_1, a_2, a_3) \in T^{\circ}$ and all $r,s \in \{1, 2, 3\}$,
$r \neq s$, there exists a unique $(b_1, b_2, b_3) \in T^{\star}$
such that $a_r=b_r$ and $a_s=b_s$.

\item[(R3)] For all $(a_1, a_2, a_3) \in T^{\star}$ and all $r,s \in \{1, 2, 3\}$,
$r \neq s$, there exists a unique $(b_1, b_2, b_3) \in T^{\circ}$
such that $a_r=b_r$ and $a_s=b_s$.

\end{itemize}

Note that (R2) and (R3) imply that each row (column) of $T^{\circ}$
contains the same subset of $A_3$ as the corresponding row (column)
of $T^{\star}$.  We sometimes refer to $T^{\circ}$ as a {\em latin
trade} and $T^{\star}$ its {\em disjoint mate}.  The {\em size}
of a latin bitrade is equal to $|T^{\circ}|=|T^{\star}|$.

Given any two distinct latin squares $L^{\circ}$ and $L^{\star}$, 
each of order $n$, 
$(L^{\circ}\setminus L^{\star}, 
L^{\star}\setminus L^{\circ})$ is a latin bitrade.  
In this
way, latin bitrades describe the difference between two latin squares.
In fact, we may think of a latin trade as a subset of a latin square which may be replaced with a disjoint mate to obtain a new latin square. 

An {\em isotopism} of a partial latin square is a relabelling of
the elements of $A_1$, $A_2$ and $A_3$. 
Combinatorial properties of
partial latin squares are, in general, preserved under isotopism 
(in particular, any isotope of a latin bitrade is also a latin bitrade),
a fact we exploit in this paper.

\begin{example}
Let $A_1=\{a,b\}$, $A_2=\{c,d,e\}$ and $A_3=\{f,g,h\}$. 
Then $(T^{\circ},T^{\star})$ is a {\em latin bitrade}, where
$T^{\circ},T^{\star}\subset A_1\times A_2\times A_3$ are shown below:
\[
	T^{\circ} = ~\begin{tabular}{|c||c|c|c|}
	\hline $\circ$ & $c$ & $d$ & $e$ \\
	\hline \hline $a$ & $f$ & $g$ & $h$ \\
	\hline $b$ & $g$ & $h$ & $f$ \\
	\hline 
	\end{tabular}
	\qquad
	T^{\star} = ~\begin{tabular}{|c||c|c|c|}
	\hline $\star$ & $c$ & $d$ & $e$ \\
	\hline \hline $a$ & $g$ & $h$ & $f$ \\
	\hline $b$ & $f$ & $g$ & $h$ \\
	\hline 
	\end{tabular}.
\]
We may also write:
\begin{align*}
T^{\circ} & = \{(a,c,f),(a,d,g),(a,e,h),(b,c,g),(b,d,h),(b,e,f)\}\textnormal{ and } \\
T^{\star} & = \{(a,c,g),(a,d,h),(a,e,f),(b,c,f),(b,d,g),(b,e,h)\}. 
\end{align*} 
\label{egg1}
\end{example} 
It turns out (as shown in \cite{Dr9}) that latin bitrades may be defined 
in terms of permutations.
We first show how to derive permutations of $T^{\circ}$ from a given latin bitrade.

\begin{definition}
Define the map $\beta_r : T^{\star} \rightarrow
T^{\circ}$ where $(a_1,a_2,a_3)\beta_r = (b_1,b_2,b_3)$ implies that
$a_r \neq b_r$
and $a_i = b_i$ for $i \neq r$.
(Note that by conditions {\rm (R2)} and {\rm (R3)} the map $\beta_r$ and its
inverse are well defined.) 
In particular, let $\tau_1,\tau_2,\tau_3: T^{\circ} \rightarrow T^{\circ}$, where
$\tau_1=\beta_2^{-1}\beta_3$,
$\tau_2=\beta_3^{-1}\beta_1$ and
$\tau_3=\beta_1^{-1}\beta_2$.
For each $i\in \{1,2,3\}$, let ${\mathcal A}_i$ be the set of cycles in $\tau_i$. 
We will see these cycles as permutations of $T^{\circ}$.
\label{defya}
\end{definition}
 
\begin{example}
Consider the latin bitrade constructed in Example $\ref{egg1}$. Here:
\begin{align*}
\tau_1 &= ((a,c,f)(a,e,h)(a,d,g))((b,c,g)(b,d,h)(b,e,f)) \\
\tau_2 &= ((a,c,f)(b,c,g))((a,e,h)(b,e,f))((a,d,g)(b,d,h)) \textnormal{ and } \\
\tau_3 &= ((a,c,f)(b,e,f))((a,d,g)(b,c,g))((b,d,h)(a,e,h)). 
\end{align*}
\end{example}

\begin{lemma}
\label{perm}
The permutations $\tau_1$, $\tau_2$ and $\tau_3$ satisfy the following properties:
\begin{itemize}
\item[{\rm (Q1)}] If $\rho \in {\mathcal A}_r$, $\mu \in {\mathcal A}_s$, $1 \leq r < s \leq 3$,
then $\left| \mov(\rho) \cap \mov(\mu) \right| \leq 1$.

\item[{\rm (Q2)}] For each $i \in \{1, 2, 3\}$, $\tau_i$ has no fixed points.
\item[{\rm (Q3)}] $\tau_1\tau_2\tau_3=1$.
\end{itemize}
\end{lemma}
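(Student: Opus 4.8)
The plan is to verify (Q1), (Q2), (Q3) directly from the definition of the maps $\beta_r$, exploiting the single structural fact that $\beta_r$ alters exactly the $r$-th coordinate of a triple while fixing the other two (and that each $\beta_r$ is a bijection, hence so is each $\tau_i$, by (R2) and (R3)).

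I would dispatch (Q3) first by a telescoping computation. Since $\tau_1=\beta_2^{-1}\beta_3$, $\tau_2=\beta_3^{-1}\beta_1$, $\tau_3=\beta_1^{-1}\beta_2$, composing left to right gives $\tau_1\tau_2\tau_3=\beta_2^{-1}\beta_3\beta_3^{-1}\beta_1\beta_1^{-1}\beta_2=\beta_2^{-1}\beta_2=1$. For (Q2), fix $i$; take $i=1$ (the cases $i=2,3$ are entirely analogous), so $\tau_1=\beta_2^{-1}\beta_3$. Suppose $x\in T^{\circ}$ satisfies $x\tau_1=x$, and put $y=x\beta_2^{-1}\in T^{\star}$, so that $y\beta_2=x$ and $y\beta_3=x$. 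By the definition of $\beta_2$ the triples $y$ and $x$ differ in their second coordinate, while by the definition of $\beta_3$ they agree in their second coordinate; this contradiction shows $\tau_1$ is fixed-point-free. (Equivalently, $y\beta_2=y\beta_3$ forces $y=x$, contradicting $T^{\circ}\cap T^{\star}=\emptyset$ from (R1).)

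The crux is (Q1), and the key observation is that $\tau_r$ fixes the $r$-th coordinate of every triple: $\tau_1=\beta_2^{-1}\beta_3$ is a composition of maps each altering only the second or the third coordinate, so it leaves the first coordinate unchanged, and likewise for $\tau_2$ and $\tau_3$. Hence every cycle $\rho$ of $\tau_r$ is supported on triples sharing one fixed value in their $r$-th coordinate, i.e. $\mov(\rho)\subseteq\{(a_1,a_2,a_3)\in T^{\circ}: a_r=c\}$ for some constant $c=c(\rho)$. Now take $\rho\in\mathcal{A}_r$ and $\mu\in\mathcal{A}_s$ with $r<s$. Any two triples $x,x'\in\mov(\rho)\cap\mov(\mu)$ agree in the $r$-th coordinate (both lie in $\mov(\rho)$) and in the $s$-th coordinate (both lie in $\mov(\mu)$); since $x,x'\in T^{\circ}$ and $T^{\circ}$ is a partial latin square, property (P1) forces $x=x'$. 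Therefore $|\mov(\rho)\cap\mov(\mu)|\le 1$, as required.

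I do not expect a genuine obstacle: the whole lemma rests on the bookkeeping fact that $\beta_r$ touches only coordinate $r$, together with (P1). The only point demanding care is keeping the order of composition correct (the paper composes left to right), so that the cancellation in (Q3) and the coordinate tracking in (Q1) and (Q2) come out as intended.
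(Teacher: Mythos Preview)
Your proof is correct and follows essentially the same approach as the paper: both rest on the observation that $\tau_i$ fixes the $i$-th coordinate, dispatch (Q3) by the telescoping cancellation $\beta_2^{-1}\beta_3\beta_3^{-1}\beta_1\beta_1^{-1}\beta_2=1$, and derive (Q1) from the fact that any two triples in $\mov(\rho)\cap\mov(\mu)$ must share two coordinates. Your explicit appeal to (P1) in (Q1) is arguably cleaner than the paper's phrasing, but the underlying argument is the same.
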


\begin{proof}
Observe that $\tau_i$ leaves the $i$--th coordinate of a triple fixed.  
\begin{itemize}
\item[{\rm (Q1)}] 
Let $r=1$, $s=2$ and take $\rho$, $\mu$ as specified.
Suppose that $x$ and $y$ are distinct points in 
$\mov(\rho)\cap \mov(\mu)$. Let $x = (x_1,x_2,x_3)$. Then
$(x_1,x_2,x_3)\rho^i = (x_1,x'_2,x'_3) = y$
and
$(x_1,x_2,x_3)\mu^j = (x''_1,x_2,x''_3) = y$
for some $i$, $j$. 
This implies that $x_2 = x'_2$, a contradiction
to the fact that $\rho$ leaves only the first co-ordinate fixed.
The cases $(r,s) = (1,3)$ and
$(2,3)$ are similar.

\item[{\rm (Q2)}] Each 
$\tau_i = \beta_s^{-1}\beta_r$ 
changes the $t$--th
component of a triple $x$, where $t \in \{ s,r\}$.

\item[{\rm (Q3)}] 
Observe that 
$\tau_1\tau_2\tau_3  
= \beta_2^{-1}\beta_{3}
\beta_{3}^{-1}\beta_{1}
\beta_{1}^{-1}\beta_{2}
= 1$.\qedhere
\end{itemize} 
\end{proof}

Thus from a given latin bitrade we may define a set of permutations with
particular properties. It turns out that there exists a 
reverse process.

\begin{definition}
\label{sdef}
Let $\tau_1$, $\tau_2$, $\tau_3$ be permutations on some set $X$ and for
$i\in \{1,2,3\}$, let ${\mathcal A}_i$ be the set of cycles of $\tau_i$. 
 Suppose that 
$\tau_1,\tau_2,\tau_3$  satisfy Conditions {\rm (Q1)}, {\rm (Q2)} and {\rm (Q3)} 
from Lemma $\ref{perm}$. Next, define 
\[
S^{\circ}= \{(\rho_1,\rho_2,\rho_3)\mid \rho_i\in {\mathcal A}_i
\textnormal{ and there exists $x$ such that $x \in \mov(\rho_i)$ for all $i$ } \}
\]
and
\[
\begin{array}{l} 
S^{\star}  =  
\{(\rho_1,\rho_2,\rho_3)\mid \rho_i\in {\mathcal A}_i, \textnormal{ there exist distinct
points $x$, $x'$, $x''$ in $X$ such that }  \\ 
x\rho_1=x', 
x'\rho_2=x'', 
x''\rho_3=x \}. 
\end{array}
\]
\end{definition}

\begin{theorem}[\cite{Dr9}]
\label{perm2}
Then the pair of partial latin squares
$(S^{\circ},S^{\star})$ is a latin bitrade of size $|X|$ with 
$|{\mathcal A}_1|$ rows, $|{\mathcal A}_2|$ columns and $|{\mathcal A}_3|$ entries. 
\end{theorem}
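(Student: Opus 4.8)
The plan is to verify directly that the pair $(S^{\circ}, S^{\star})$ defined from $\tau_1, \tau_2, \tau_3$ satisfies the axioms (P1), (P2) of a partial latin square (for both $S^{\circ}$ and $S^{\star}$) together with the bitrade axioms (R1), (R2), (R3), and finally to count the size and the number of rows, columns and entries. Throughout I would work with the index set $\{1,2,3\}$ read cyclically, so that whenever $\{r,s,t\} = \{1,2,3\}$ I may use $\tau_r\tau_s\tau_t = 1$ in the appropriate rotation, which follows from (Q3) since $\tau_1\tau_2\tau_3 = 1$ is equivalent to $\tau_2\tau_3\tau_1 = 1$ and $\tau_3\tau_1\tau_2 = 1$ by conjugation. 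The sets $A_i$ in the conclusion are the cycle sets ${\mathcal A}_i$, and these are pairwise disjoint because a cycle of $\tau_i$ determines $i$ (this needs the observation, to be made at the start, that no two of the $\tau_i$ share a cycle — indeed if $\rho$ were a common cycle of $\tau_r$ and $\tau_s$ with $r\neq s$ then $|\mov(\rho)\cap\mov(\rho)| = |\mov(\rho)| \geq 2$ would violate (Q1), using (Q2) to guarantee $|\mov(\rho)| \geq 2$).

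The conceptual core of the argument is to set up a bijection between $X$ and $S^{\circ}$: to each $x \in X$ associate the triple $(\rho_1, \rho_2, \rho_3)$ where $\rho_i$ is the unique cycle of $\tau_i$ containing $x$ in its support (unique and well-defined by (Q2), which ensures every point is moved by every $\tau_i$). This triple lies in $S^{\circ}$ by definition. Conversely, given $(\rho_1, \rho_2, \rho_3) \in S^{\circ}$, the witnessing point $x$ is unique: if $x$ and $y$ both lie in $\mov(\rho_1) \cap \mov(\rho_2) \cap \mov(\rho_3)$ and $x\neq y$, then in particular $x,y \in \mov(\rho_1) \cap \mov(\rho_2)$, contradicting (Q1). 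Hence $|S^{\circ}| = |X|$. The same map, composed appropriately, handles $S^{\star}$: given $(\rho_1,\rho_2,\rho_3) \in S^{\star}$ with witnesses $x, x', x''$, one checks these are forced, and the map $(\rho_1,\rho_2,\rho_3) \mapsto x$ is again a bijection $S^{\star} \to X$, so $|S^{\star}| = |X|$; the inverse sends $x$ to $(\rho_1, \rho_2, \rho_3)$ where $\rho_1$ is the cycle of $\tau_1$ through $x$, $\rho_2$ the cycle of $\tau_2$ through $x\tau_1$, $\rho_3$ the cycle of $\tau_3$ through $x\tau_1\tau_2 = x\tau_3^{-1}$; the equality $x''\rho_3 = x$ then follows from $\tau_1\tau_2\tau_3 = 1$.

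For (P1) applied to $S^{\circ}$: suppose $(\rho_1,\rho_2,\rho_3)$ and $(\sigma_1,\sigma_2,\sigma_3)$ lie in $S^{\circ}$ and agree in two coordinates, say $\rho_1 = \sigma_1$ and $\rho_2 = \sigma_2$. Their witness points $x$ and $y$ both lie in $\mov(\rho_1) \cap \mov(\rho_2)$, so by (Q1) either $x = y$ — whence $\rho_3 = \sigma_3$ and all three coordinates agree — or the two cycles are equal in those two slots but this forces, via uniqueness of the witness, nothing further unless $x=y$; so agreement in two coordinates forces agreement in the third. Axiom (P2) for $S^{\circ}$ amounts to surjectivity: every cycle $\rho_i \in {\mathcal A}_i$ appears as the $i$-th coordinate of some triple in $S^{\circ}$, which is immediate since $\rho_i$ has nonempty support by (Q2) and any point $x$ in it yields a triple through $x$. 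The analogous verifications for $S^{\star}$ run the same way using the $S^{\star}$-bijection. For (R1), disjointness of $S^{\circ}$ and $S^{\star}$: a triple in both would have, simultaneously, a point $x$ moved by all three cycles and a chain $x \to x' \to x'' \to x$ with $x,x',x''$ distinct; but then $x \in \mov(\rho_1) \cap \mov(\rho_2)$ and $x' \in \mov(\rho_1) \cap \mov(\rho_2)$ with $x \neq x'$ (since $x' = x\rho_1$ and $x \in \mov(\rho_1)$... actually one argues $x'$ is also a common moved point and distinct from $x$), contradicting (Q1) — this is the step I expect to require the most care in bookkeeping. For (R2): given $(\rho_1,\rho_2,\rho_3)\in S^{\circ}$ with witness $x$, and a pair of coordinates to preserve, say $r=1,s=2$, I must produce a unique $(\sigma_1,\sigma_2,\sigma_3) \in S^{\star}$ with $\sigma_1 = \rho_1$, $\sigma_2 = \rho_2$; take $\sigma_3$ to be the cycle of $\tau_3$ containing $x\rho_1\rho_2^{-1}$... more precisely, set $x'' = x\tau_3^{-1}$, $x' = x''\tau_3\tau_1^{-1}$-type construction so that the chain closes up, and verify both membership in $S^{\star}$ and uniqueness from (Q1). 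Axiom (R3) is the mirror image starting from $S^{\star}$. The main obstacle, as flagged, is the careful chain-chasing in (R1), (R2), (R3): one must repeatedly extract from a configuration two distinct points lying in the intersection of the supports of a cycle of $\tau_r$ and a cycle of $\tau_s$ and invoke (Q1), and getting the three cyclic rotations of (Q3) marshalled correctly so that the three ``preserve a pair of coordinates'' cases are genuinely symmetric rather than requiring three separate arguments.
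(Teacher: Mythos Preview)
Your approach is essentially the paper's: verify (P1) from (Q1), use (Q2)/(Q3) to set up the correspondence between points of $X$ and triples, and check (R1)--(R3) by producing two distinct points in some $\mov(\rho_r)\cap\mov(\rho_s)$. Your explicit bijections $X\to S^{\circ}$ and $X\to S^{\star}$ are a mild elaboration of what the paper does implicitly, but the logic is the same.

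The one spot where your sketch actually goes wrong is (R1), precisely where you flag it. You write the $S^{\circ}$ witness and the start of the $S^{\star}$ chain with the same letter $x$, but nothing forces them to coincide: a priori the $S^{\circ}$ witness is some $y\in\mov(\rho_1)\cap\mov(\rho_2)\cap\mov(\rho_3)$, while the $S^{\star}$ chain is $x\to x'\to x''\to x$ with no given relation between $y$ and $x$. Your parenthetical ``since $x'=x\rho_1$ and $x\in\mov(\rho_1)$'' therefore does not establish $y\neq x'$. The paper's argument avoids a case split: one has $x'\in\mov(\rho_1)\cap\mov(\rho_2)$ and $x''\in\mov(\rho_2)\cap\mov(\rho_3)$; since $x'\neq x''$, the witness $y$ differs from at least one of them, and that pair gives two distinct points in the corresponding $\mov$-intersection, contradicting (Q1). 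With this fix your outline goes through.
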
 

\begin{proof}
Condition (Q1) ensures that $S^{\circ}$ is a partial latin square.

From (Q3), $\tau_1\tau_2\tau_3$ fixes every point in $X$. 
It follows that, for each point $x\in X$, there is a unique choice of 
$\rho_1\in {\mathcal A}_1$ not fixing $x$, 
$\rho_2\in {\mathcal A}_2$ not fixing $x\rho_1$, and
$\rho_3\in {\mathcal A}_3$ not fixing $x\rho_1\rho_2$,
such that $x\rho_1\rho_2\rho_3=x$.
Suppose that 
$(\rho_1,\rho_2,\rho_3), 
(\rho_1,\rho_2,\rho_3')\in S^{\star}$, where $\rho_3\neq \rho_3'$.  
Then from our previous observation, if $\rho_1\rho_2\rho_3$ fixes $x\in X$
and $\rho_1\rho_2\rho_3'$ fixes $x'\in X$, $x\neq x'$. 
But this implies that 
$x\rho_1,\, x'\rho_1 \in \mov(\rho_1)\cap \mov(\rho_2)$, 
contradicting (Q1). By symmetry,
$S^{\star}$ is also a partial latin square.   
 
Next, suppose that $(\rho_1,\rho_2,\rho_3)\in S^{\circ}\cap S^{\star}$.
Then there are distinct points $x,x',x''$ such that
$x\rho_1=x'$, 
$x'\rho_2=x''$ and 
$x''\rho_3=x$.
Thus $x' \in \mov(\rho_1) \cap \mov(\rho_2)$ and 
$x'' \in \mov(\rho_2) \cap \mov(\rho_3)$. 
But there exists $y \in \mov(\rho_1) \cap \mov(\rho_2) \cap \mov(\rho_3)$ 
and either $y\neq x'$ or $y\neq x''$ is true. Without loss of generality suppose that
$y\neq x''$ is true. Then $\left| \mov(\rho_2) \cap \mov(\rho_3) \right|
\geq 2$,
 contradicting (Q1).
Thus $S^{\circ}\cap S^{\star}=\emptyset$ and (R1) is satisfied.
 
Next we show that (R2) is satisfied. So suppose that 
$(\rho_1,\rho_2,\rho_3)\in S^{\circ}$ and let 
$y \in \mov(\rho_1) \cap \mov(\rho_2) \cap \mov(\rho_3)$. 
Then there is some $x$ and $z$ such that
$x\rho_1=y$ and $y\rho_2=z$. But $\rho_1$ is the only permutation in
${\mathcal A}_1$ that does not fix $x$ and $\rho_2$ is the only permutation in
${\mathcal A}_2$ that does not fix $y$. It follows from the observation in the
second paragraph of the proof that 
there is a unique $\rho_3'\in {\mathcal A}_3$ such that $z\rho_3' =x$. Thus 
$(\rho_1,\rho_2,\rho_3')\in S^{\star}$.
By symmetry (R2) is satisfied. 

Finally we show that (R3) is satisfied. 
So let $(\rho_1,\rho_2,\rho_3)\in S^{\star}$.
Then there are distinct points $x$, $x'$, $x''$ such that
$x\rho_1=x'$, 
$x'\rho_2=x''$ and 
$x''\rho_3=x$.
Then there exists $x' \in \mov(\rho_1) \cap \mov(\rho_2)$. 
Let $\rho_3'$ be the unique cycle of ${\mathcal A}_3$ that does not fix $x'$. 
Then $(\rho_1,\rho_2,\rho_3')\in S^{\circ}$.
By symmetry (R3) is satisfied. 

Since 
each cycle in $\rho_1$, $\rho_2$, $\rho_3$  gives rise to a unique row, column, entry (respectively),
the latin bitrade $(S^{\circ},S^{\star})$ will have
$|{\mathcal A}_1|$ rows, $|{\mathcal A}_2|$ columns and $|{\mathcal A}_3|$ entries. 
\end{proof}

\begin{example}
Let $\tau_1=(123)(456)$, $\tau_2=(14)(26)(35)$ and
$\tau_3=(16)(34)(25)$ be three permutations on the set
$\{1,2,3,4,5,6\}$. Then these permutations satisfy Conditions 
{\em (Q1)}, {\em (Q2)} and {\em (Q3)}, thus generating a latin bitrade of
size $6$ with two rows, three columns and three different entries.
In fact, this latin bitrade is isotopic to the latin bitrade
given in Example $\ref{egg1}$.
\end{example}

A latin bitrade is said to be {\em separated} if each row, column and 
entry gives rise to exactly one cycle of $\tau_1$, $\tau_2$ and $\tau_3$, 
respectively (see Definition \ref{defya}). 
The latin bitrade given in Example \ref{egg1} is separated. 
The next example gives a non-separated latin bitrade. 
\begin{example}
Let $A_1=\{a,b,c\}$, $A_2=\{d,e,f,g\}$ and $A_3=\{h,i,j,k\}$. 
Then $(T^{\circ},T^{\star})$ is a {\em latin bitrade}, where
$T^{\circ},T^{\star}\subset A_1\times A_2\times A_3$ are shown below:
\[
	T^{\circ} = 
~\begin{tabular}{|c||c|c|c|c|}
	\hline $\circ$ & $d$ & $e$ & $f$ & $g$ \\
	\hline \hline $a$ & $h$ & $i$ & $j$ & $k$ \\
	\hline $b$ & $i$ & $l$ & $k$ & \\
	\hline $c$ & $k$ & $j$ & $l$ & $h$ \\
	\hline 
	\end{tabular}
	\qquad
	T^{\star} = 
~\begin{tabular}{|c||c|c|c|c|}
	\hline $\circ$ & $d$ & $e$ & $f$ & $g$ \\
	\hline \hline $a$ & $i$ & $j$ & $k$ & $h$ \\
	\hline $b$ & $k$ & $i$ & $l$ & \\
	\hline $c$ & $h$ & $l$ & $j$ & $k$ \\
	\hline 
	\end{tabular}.
\]
Moreover, $(T^{\circ},T^{\star})$ is non-separated,  
by observation of row $c$.
\end{example}

The next theorem demonstrates that the process in 
Definition \ref{sdef} is the inverse of the process in Definition \ref{defya} for 
separated latin bitrades. 
\begin{theorem}
Let $(T^{\circ},T^{\star})$ be a separated latin bitrade. 
Let $\tau_1$, $\tau_2$ and $\tau_3$ be the corresponding set of permutations 
as given in Definition $\ref{defya}$. 
In turn, let $(S^{\circ},S^{\star})$ be the latin bitrade defined from 
$\tau_1$, $\tau_2$ and $\tau_3$ via Definition $\ref{sdef}$.
Then 
$(T^{\circ},T^{\star})$ and 
$(S^{\circ},S^{\star})$ are isotopic latin bitrades.
\end{theorem}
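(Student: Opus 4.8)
The plan is to write down an explicit isotopism $\theta=(\phi_1,\phi_2,\phi_3)$ from $(T^{\circ},T^{\star})$ to $(S^{\circ},S^{\star})$, where $\phi_i\colon A_i\to\mathcal{A}_i$. Recall from the proof of Lemma~\ref{perm} that $\tau_i$ fixes the $i$-th coordinate of every triple; hence, for a fixed $a_i\in A_i$, the set of triples of $T^{\circ}$ whose $i$-th coordinate equals $a_i$ is a nonempty (by (P2)) union of cycles of $\tau_i$, and since $(T^{\circ},T^{\star})$ is \emph{separated} it is a single cycle, which I denote $\phi_i(a_i)$. Distinct values of $a_i$ give disjoint such sets, so $\phi_i$ is injective; every cycle of $\tau_i$ is contained in one of these sets and hence, by separatedness, equals it, so $\phi_i$ is surjective; thus $\phi_i$ is a bijection and $\theta$ is a legitimate relabelling. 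I also record the consequence of (Q2) that, for $x=(x_1,x_2,x_3)\in T^{\circ}$, the unique cycle $\rho\in\mathcal{A}_i$ with $x\in\mov(\rho)$ is $\rho=\phi_i(x_i)$.

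With this in hand, the $S^{\circ}$ half is immediate. By Definition~\ref{sdef}, $(\rho_1,\rho_2,\rho_3)\in S^{\circ}$ exactly when some $x\in T^{\circ}$ lies in $\mov(\rho_i)$ for every $i$; by the last remark this forces $x=(\phi_1^{-1}\rho_1,\phi_2^{-1}\rho_2,\phi_3^{-1}\rho_3)$, so $S^{\circ}$ is precisely $\{(\phi_1(a_1),\phi_2(a_2),\phi_3(a_3)):(a_1,a_2,a_3)\in T^{\circ}\}=\theta(T^{\circ})$.

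The substantive step is to show $(a_1,a_2,a_3)\in T^{\star}$ if and only if $(\phi_1(a_1),\phi_2(a_2),\phi_3(a_3))\in S^{\star}$, for which one traces through the defining factorisations $\tau_1=\beta_2^{-1}\beta_3$, $\tau_2=\beta_3^{-1}\beta_1$, $\tau_3=\beta_1^{-1}\beta_2$, using that $\beta_r^{-1}\colon T^{\circ}\to T^{\star}$ alters exactly the $r$-th coordinate of a triple. For the forward direction, let distinct $x,x',x''\in T^{\circ}$ witness $(\rho_1,\rho_2,\rho_3)\in S^{\star}$ with $x\rho_1=x'$, $x'\rho_2=x''$, $x''\rho_3=x$, and write $\rho_i=\phi_i(a_i)$; since $x,x'$ share first coordinate $a_1$, $x',x''$ share second coordinate $a_2$, and $x,x''$ share third coordinate $a_3$, and since $x\rho_1=x\tau_1=x\beta_2^{-1}\beta_3$ (as $\rho_1$ is the cycle of $\tau_1$ through $x$), matching the second coordinate of $x'$ forces $x\beta_2^{-1}=(a_1,a_2,a_3)$, which therefore lies in $T^{\star}$. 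For the converse, given $(a_1,a_2,a_3)\in T^{\star}$ I would put $x=(a_1,a_2,a_3)\beta_2\in T^{\circ}$, $x'=x\tau_1$, $x''=x'\tau_2$, and check by the same coordinate bookkeeping that $x'=(a_1,a_2,a_3')$ and $x''=(a_1',a_2,a_3)$ with $a_3'\neq a_3$, $a_1'\neq a_1$, that $x''\tau_3=x$, and that $x,x',x''$ are pairwise distinct (each pair differs in one coordinate); this exhibits $(\phi_1(a_1),\phi_2(a_2),\phi_3(a_3))\in S^{\star}$. A shortcut is available: once one of the two inclusions $\theta(T^{\star})\subseteq S^{\star}$ or $\theta(T^{\star})\supseteq S^{\star}$ is established, Theorem~\ref{perm2} (which gives $|S^{\star}|=|X|=|T^{\circ}|=|T^{\star}|$) upgrades it to equality.

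I expect the coordinate bookkeeping in the $S^{\star}$ step to be the only real obstacle: keeping track of which coordinate each $\beta_r^{\pm1}$ changes through the two-fold compositions $\tau_i=\beta_s^{-1}\beta_r$, and verifying that the three points $x,x',x''$ produced from a $T^{\star}$-triple are genuinely distinct. Everything else reduces to unwinding Definitions~\ref{defya} and~\ref{sdef} once the bijections $\phi_i$ are fixed.
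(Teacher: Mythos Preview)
Your proposal is correct and follows essentially the same approach as the paper: the isotopism $\phi_i$ you define is exactly the paper's $f_i$, and your coordinate-chasing for the $T^{\star}\to S^{\star}$ direction mirrors the paper's argument via the $\beta_r$ maps. You are in fact more thorough than the paper, which only verifies the inclusions $\theta(T^{\circ})\subseteq S^{\circ}$ and $\theta(T^{\star})\subseteq S^{\star}$ and leaves the upgrade to equality implicit, whereas you both sketch the reverse inclusion and note the cardinality shortcut via Theorem~\ref{perm2}.
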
 

\begin{proof}
For each 
$i\in \{1,2,3\}$ and each 
$x\in A_i$, 
let $f_i(x)\in {\mathcal A}_i$ be the 
cycle in $\tau_i$ which includes $x$ in each ordered triple. 
Since 
$(T^{\circ},T^{\star})$ is separated, each $f_i$ is a $1-1$ correspondence 
between 
$A_i$ and ${\mathcal A}_i$. 
Indeed, if $(x_1,x_2,x_3)\in T^{\circ}$, 
then $(x_1,x_2,x_3)\in 
\mov(f_1(x_1))\cap 
\mov(f_2(x_2))\cap 
\mov(f_3(x_3))$. 
Thus  
$(f_1(x_1),f_2(x_2),f_3(x_3))\in S^{\circ}$.

Next, let $(y_1,y_2,y_3)\in T^{\star}$. 
Let $(y_1,y_2,y_3)\beta_1 =(y_1',y_2,y_3)\in T^{\circ}$, 
where $y_1'\neq y_1$. Define
$y_2'$ and $y_3'$ similarly.
Then $\tau_1(=\beta_2^{-1}\beta_3)$ (in fact, its cycle $f_1(y_1)$) 
maps $(y_1,y_2',y_3)$ to 
$(y_1,y_2,y_3')$. 
Similarly 
$f_2(y_2)$ maps 
$(y_1,y_2,y_3')$ to $(y_1',y_2,y_3)$ and  
$f_3(y_3)$ maps 
$(y_1',y_2,y_3)$ to $(y_1,y_2',y_3)$. 
It follows that
$(f_1(y_1)$, $f_2(y_2)$, $f_3(y_3)) \in S^{\star}$.
\end{proof}

\begin{definition}
A latin bitrade $(T^{\circ}, T^{\star})$ is said to be {\em primary}  
if whenever $(U^{\circ},U^{\star})$ is a latin bitrade such that
$U^{\circ}\subseteq
T^{\circ}$ 
and
$U^{\star}\subseteq   
T^{\star}$, then 
$(T^{\circ}, T^{\star})=  
(U^{\circ}, U^{\star})$.  
\end{definition}  

It is not hard to show that a non-primary latin bitrade may be partitioned into 
smaller, disjoint latin bitrades.

\begin{definition}
A latin trade $T^{\circ}$ is said to be {\em minimal}  
if whenever $(U^{\circ},U^{\star})$ is a latin bitrade such that
$U^{\circ} \subseteq T^{\circ}$ 
then   
$T^{\circ}=  
U^{\circ}$.  
\end{definition}  

Note that for any primary bitrade $(T^{\circ},\, T^{\star})$, it is not
necessarily true that $T^{\circ}$ or $T^{\star}$ is a minimal trade.
Minimal latin trades are important in the study of {\em critical sets}
(minimal defining sets) of latin squares (see \cite{Ke2} for a recent survey). 

So a 
separated 
latin bitrade may be identified with a set of permutations that act
on a particular set $X$. Clearly, the permutations $\tau_1$, $\tau_2$,
$\tau_3$ generate some group $G$ which acts on the set $X$. We now study
the case where the set $X$ is the set of elements of $G$.

\begin{definition}
\label{tdef}
Let $G$ be a finite group. 
Let $a$, 
$b$, $c$ be non-identity elements of $G$ and let
$A=\langle a \rangle$,
$B=\langle b \rangle$ and
$C=\langle c \rangle$ 
such that:
\begin{itemize}
\item[{\rm (G1)}] 
 $abc=1$ and 
\item[{\rm (G2)}] 
$|A\cap B| = |A \cap C| = |B\cap C|=1$.
\end{itemize}
Next, define: 
\[
T^{\circ} = \{ (gA,gB,gC)\mid g\in G\}, \quad
T^{\star} = \{ (gA,gB,ga^{-1}C)\mid g\in G\}. 
\]
\end{definition}

\begin{theorem}
\label{main}
The pair of partial latin squares
$(T^{\circ},T^{\star})$ as defined above
is a latin bitrade with size $|G|$,
$|G:A|$ rows (each with $|A|$ entries), 
$|G:B|$ columns (each with $|B|$ entries)  and
$|G:C|$ entries (each occurring $|C|$ times).

If, in turn,
\begin{itemize}
\item[{\rm (G3)}] 
$\langle a,b,c \rangle = G$,
\end{itemize}
then the latin bitrade is primary.
\end{theorem}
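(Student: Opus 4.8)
The plan is to realise $(T^{\circ},T^{\star})$ through Theorem~\ref{perm2} by letting $G$ act on itself by right translation. Put $X=G$ and let $\tau_1,\tau_2,\tau_3\colon G\to G$ be right translation by $a$, $b$, $c$, so that $x\tau_1=xa$, $x\tau_2=xb$, $x\tau_3=xc$. First I check (Q1)--(Q3). Condition (Q3) is immediate since $x\tau_1\tau_2\tau_3=xabc=x$ by (G1), and (Q2) holds because $a,b,c$ are non-identity. The cycle $\rho$ of $\tau_1$ through $g$ is the left coset $gA$, and as $|A|\geq 2$ we have $\mov(\rho)=gA$; likewise the moved sets of the cycles of $\tau_2$ and $\tau_3$ are the cosets $gB$ and $gC$. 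If $x,y\in gA\cap hB$ then $x^{-1}y\in A\cap B=\{1\}$ by (G2), so $|gA\cap hB|\leq 1$, and the pairs $(A,C)$ and $(B,C)$ behave the same way; this is (Q1).

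By Theorem~\ref{perm2} the pair $(S^{\circ},S^{\star})$ produced from $\tau_1,\tau_2,\tau_3$ is then a latin bitrade of size $|G|$ with $|G:A|$ rows, $|G:B|$ columns and $|G:C|$ entries. Relabel each cycle of $\tau_i$ by the coset of $\langle a\rangle$, $\langle b\rangle$, $\langle c\rangle$ to which it is equal; this is an isotopism, and isotopes of bitrades are bitrades. Under this relabelling, $(\rho_1,\rho_2,\rho_3)\in S^{\circ}$ iff some $x$ lies in all of $\rho_1,\rho_2,\rho_3$, i.e.\ $\rho_i$ are $xA,xB,xC$; hence $S^{\circ}=T^{\circ}$. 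For $S^{\star}$, if $x,x',x''$ are distinct with $x\rho_1=x'$, $x'\rho_2=x''$, $x''\rho_3=x$, then, since $\rho_1$ moves $x$, necessarily $\rho_1=xA$ and $x'=xa$, then $\rho_2=xaB$ and $x''=xab$, and $x''c=xabc=x$ so the third relation holds automatically with $\rho_3=xabC$; the three points are distinct because $a,b,c\neq 1$. Setting $g=xa$, which runs over $G$, and noting that $xabC=gbC$ with $gb=ga^{-1}(ab)=ga^{-1}c^{-1}$, we get $\rho_3=gbC=ga^{-1}c^{-1}C=ga^{-1}C$, so $S^{\star}=\{(gA,gB,ga^{-1}C):g\in G\}=T^{\star}$. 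Finally, the triples of $T^{\circ}$ with first coordinate $gA$ are the $(hA,hB,hC)$ with $h\in gA$, and distinct $h$ give distinct triples (if $hA=h'A$, $hB=h'B$, $hC=h'C$ then $h^{-1}h'\in A\cap B\cap C=\{1\}$), so each row contains $|A|$ entries; by the same reasoning each column contains $|B|$ entries and each entry occurs $|C|$ times, which proves the first assertion.

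For primacy, assume also (G3). The map $g\mapsto(gA,gB,gC)$ is a bijection $G\to T^{\circ}$ (same computation with $A\cap B\cap C=\{1\}$), and similarly $g\mapsto(gA,gB,ga^{-1}C)$ is a bijection $G\to T^{\star}$; use these to index $T^{\circ}$ and $T^{\star}$ by $G$. A short direct computation of the maps $\beta_r$ of Definition~\ref{defya} gives $\beta_3\colon(gA,gB,ga^{-1}C)\mapsto(gA,gB,gC)$, $\beta_1\colon(gA,gB,ga^{-1}C)\mapsto(gbA,gbB,gbC)$ and $\beta_2\colon(gA,gB,ga^{-1}C)\mapsto(ga^{-1}A,ga^{-1}B,ga^{-1}C)$, whence $\tau_1=\beta_2^{-1}\beta_3$, $\tau_2=\beta_3^{-1}\beta_1$, $\tau_3=\beta_1^{-1}\beta_2$ act on the index set $G$ as right translation by $a$, $b$, $c$. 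Now let $(U^{\circ},U^{\star})$ be a latin bitrade with $U^{\circ}\subseteq T^{\circ}$ and $U^{\star}\subseteq T^{\star}$. For $r\neq s$ and $u\in U^{\star}$, the unique element of $U^{\circ}$ agreeing with $u$ in the two coordinates other than $r$ is also the unique such element of $T^{\circ}$, so the map $\beta_r$ of $(U^{\circ},U^{\star})$ is the restriction of that of $(T^{\circ},T^{\star})$; in particular $\beta_r(U^{\star})=U^{\circ}$. Hence each $\tau_i=\beta_s^{-1}\beta_r$ maps $U^{\circ}$ onto itself, i.e.\ the subset $Y\subseteq G$ indexing $U^{\circ}$ is closed under right translation by $a$, $b$, $c$, hence under $\langle a,b,c\rangle=G$. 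Since $Y\neq\emptyset$ this forces $Y=G$, so $U^{\circ}=T^{\circ}$, and then $U^{\star}=T^{\star}$ because $U^{\star}\subseteq T^{\star}$ and $|U^{\star}|=|U^{\circ}|=|T^{\star}|$. Thus $(T^{\circ},T^{\star})$ is primary.

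The main obstacle is largely bookkeeping: matching $S^{\star}$ with $T^{\star}$ on the nose requires care with left versus right translation and the reindexing $g=xa$, together with the rewriting of $xabC$ as $ga^{-1}C$ using $abc=1$. Conceptually, the only substantive point in the primacy argument is the observation that the $\beta_r$ of a sub-bitrade are restrictions of those of the ambient bitrade, so that a sub-bitrade corresponds precisely to a subset of $G$ invariant under right translation by $a$, $b$ and $c$; transitivity of this translation action (equivalently, (G3)) then closes the argument.
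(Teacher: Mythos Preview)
Your proof is correct and follows essentially the same approach as the paper: realise the bitrade via Theorem~\ref{perm2} by letting $G$ act on itself by right translation, identify cycles with left cosets, and match $S^{\circ},S^{\star}$ with $T^{\circ},T^{\star}$. Your primacy argument is a slightly cleaner reformulation of the paper's: where the paper iterates over rows and columns (``if column $gB$ meets $W^{\circ}$ then so does $ga^iB$, hence so does row $ga^ib^jA$, \dots''), you index $T^{\circ}$ bijectively by $G$, observe that the $\beta_r$ of a sub-bitrade are restrictions of those of the ambient bitrade, and conclude directly that the index set of $U^{\circ}$ is closed under right translation by $a,b,c$, hence equals $G$ by (G3); the underlying idea is the same.
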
 

\begin{proof}
For each $g\in G$, define a map $r_g$ on the 
elements of $G$ by $r_g: x \mapsto xg$. 
Let $\tau_1=r_a$, $\tau_2=r_b$ and $\tau_3=r_c$.   
Then (G1) implies (Q3) and (G2) implies (Q1).  Since $a$, $b$ and $c$
are non-identity elements, each of $r_a, r_b$ and $r_c$ has no fixed
points, so (Q2) is also satisfied.  
So from Theorem~\ref{perm2} with $X=G$, there exists
a latin bitrade $(S^{\circ},S^{\star})$ defined in terms of the cycles
of $r_a$, $r_b$ and $r_c$.
A cycle in $r_a$ is of the form $(g, ga, ga^2, \ldots, ga^{\left| A
\right|-1})$
for some $g \in G$. Hence cycles of
$r_a$ (or $r_b$ or $r_c$) permute the elements of the left cosets of $A$ (or $B$ or $C$, respectively).   

Next relabel the triples of $S^{\circ}$ and $S^{\star}$,
replacing each cycle with its corresponding (unique) left coset. 
Let this (isotopic) latin bitrade be $(T^{\circ},T^{\star})$. 
Thus, from Definition \ref{sdef}, 
\begin{align*}
T^{\circ} &= \{(g_1A,g_2B,g_3C)\mid \left|g_1A\cap g_2B\cap g_3C\right|=1\} \textnormal{\  and }\\
T^{\star} &= \{(g_1A,g_2B,g_3C)\mid 
\textnormal{ there exist elements $h\in g_1A$, $h'\in g_2B$, $h''\in g_3C$} \\ 
& \qquad \textnormal{ such that $ha=h'$, $h'b=h''$, $h''c=h$} \}.
\end{align*}
Consider an element $(g_1A,g_2B,g_3C)\in T^{\circ}$.
Then there exists unique $g\in g_1A\cap g_2B \cap g_3C$. Thus 
$(g_1A,g_2B,g_3C)=(gA,gB,gC)$.
Next, consider $(g_1A,g_2B,g_3C)\in T^{\star}$. 
In terms of $h'$ we have
$g_1A = h'A$ and $g_3C = h' a^{-1}C$. 
Letting $g = h'$ we have
$(g_1A,g_2B,g_3C)=(gA,gB,ga^{-1}C)$.
Thus
$(T^{\circ},T^{\star})$ as given in Definition \ref{tdef} 
is a latin bitrade with size $|G|$.

From Theorem \ref{perm2}, the latin bitrade $(T^{\circ},T^{\star})$ 
has $|G:A|$ rows, $|G:B|$ columns and $|G:C|$ entries. 
We next show that each row has $|A|$ entries.  
Consider an arbitrary row $gA$
 in $(T^{\circ},T^{\star})$.  
Since $ga^iA \cap ga^iB \cap ga^iC = \{ ga^i \}$ for all $i$, we have
$(ga^iA, ga^iB, ga^iC) \in T^{\circ}$ for all $i$. These entries are
 actually all on the same row since $ga^iA = gA$ for all $i$. 
Next, suppose that
$ga^iB = ga^jB$ for some $i$, $j$.
Then $a^iB = a^jB$ so $a^{i-j} \in B$ and
$i = j$.  So there are at least $\left| A \right|$ elements in the row.
If there were more than $\left| A \right|$ elements in the row then there
 must be some $h$, $x$ such that $gA \cap hB = \{ x \}$. But then
$x = ga^i = hb^j$ for some $i$, $j$ and therefore
$h = ga^i b^{-j}$ so $hB = ga^i b^{-j} B = ga^i B$ and columns of this form have
already been accounted for.  
Similarly, each column has $|B|$ entries and each entry occurs $|C|$ times. 

Finally we have the (G3) condition. 
For the sake of contradiction, suppose that 
$(T^{\circ},T^{\star})$ is not primary
and
$G = \langle a, b, c \rangle$. 
Then 
there is a (non-empty) bitrade
$(W^{\circ},W^{\star})$
 such that 
 $W^{\circ} \subset T^{\circ}$
 and 
 $W^{\star} \subset T^{\star}$. 
 Suppose that column $gB$ lies in $(W^{\circ},W^{\star})$ for some 
$g\in G$.  

 To avoid a notation clash with
 the $\tau_i$, define 
 $\nu_1=\beta_2^{-1}\beta_3$,
 $\nu_2=\beta_3^{-1}\beta_1$ and
 $\nu_3=\beta_1^{-1}\beta_2$ where the $\beta_r$ send
 $T^{\star}$ to $T^{\circ}$.
 Since 
$(W^{\circ},W^{\star})$
is a subtrade, the permutations $\nu_i$ send elements of 
$W^{\circ}$ to itself.
 The
 cycles of $\nu_i$ are of 
 length $\left| A \right|$, 
 $\left| B \right|$, or
 $\left| C \right|$, so
 $W^{\circ}$ has 
 $\left| A \right|$ entries per row 
 and
 $\left| B \right|$ entries per column.  
 In particular, if column $gB$ intersects $(W^{\circ},W^{\star})$, then 
 column $ga^iB$ intersects $(W^{\circ},W^{\star})$ for any $i$. 
 By a similar analysis of the rows, if row $gA$ lies in
 $(W^{\circ},W^{\star})$ then 
 row $gb^jA$ lies in $(W^{\circ},W^{\star})$ for any $j$. 
 It follows that 
 \[\begin{array}{llll}
 & (gA,gB,gC)\in W^{\circ} & \Rightarrow & (ga^iA=gA,ga^iB,ga^iC)\in W^{\circ} \\
 \Rightarrow & (ga^ib^jA,ga^ib^jB=ga^iB,ga^ib^jC)\in W ^{\circ} & \Rightarrow 
 & (ga^ib^jA,ga^ib^ja^kB,ga^ib^ja^kC)\in W^{\circ},
 \end{array}\]
 for any $i$, $j$ and $k$. Thus if column $gB$ 
 lies in $W^{\circ}$, then any column of the form $ga^ib^ja^kB$ lies in $W^{\circ}$. 
 By an iterative process, since any element of $G$ can be written as a product of powers of $a$ and $b$, 
 it follows that $W^{\circ}$ includes every column of $T^{\circ}$ and
 $(W^{\circ},W^{\star}) = (T^{\circ},T^{\star})$. 
\end{proof}

\begin{corollary}
\label{handy}
The latin trade $T^{\circ}$ in the previous theorem is equivalent to the 
set $\{(g_1A,g_2B,g_3C)\mid g_1,g_2,g_3\in G, \left|g_1A\cap g_2B\cap
g_3C \right| = 1\}$, 
which is in turn equivalent to 
$\{(g_1A,g_2B,g_3C)\mid g_1,g_2,g_3\in G, g_1A\cap g_2B =\{g_3\}\}$. 
\end{corollary}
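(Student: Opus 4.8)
The plan is to deduce both equivalences almost directly from the proof of Theorem~\ref{main}, the only additional ingredient being a small consequence of condition (G2). For the first equivalence there is in fact nothing new to do: in the course of proving Theorem~\ref{main} the (relabelled) trade was shown to be precisely $\{(g_1A,g_2B,g_3C)\mid |g_1A\cap g_2B\cap g_3C|=1\}$, and this set was then identified with $\{(gA,gB,gC)\mid g\in G\}$, i.e.\ the original $T^{\circ}$ of Definition~\ref{tdef}. So I would simply cite that part of the argument.

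The work lies in the second equivalence, and I would first isolate the following consequence of (G2): for \emph{any} $g_1,g_2\in G$ the set $g_1A\cap g_2B$ has at most one element. Indeed, if $h_1,h_2\in g_1A\cap g_2B$, write $h_1=g_1\alpha_1=g_2\beta_1$ and $h_2=g_1\alpha_2=g_2\beta_2$ with $\alpha_i\in A$, $\beta_i\in B$; then $h_1^{-1}h_2=\alpha_1^{-1}\alpha_2=\beta_1^{-1}\beta_2\in A\cap B=\{1\}$, so $h_1=h_2$. In particular the element $g_3$ appearing in the third description is well defined as soon as $g_1A\cap g_2B\neq\emptyset$, which is what makes that description meaningful.

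With this in hand the two inclusions are routine. If $g_1A\cap g_2B=\{g_3\}$, then since $1\in C$ we have $g_3\in g_3C$, hence $g_3\in g_1A\cap g_2B\cap g_3C$; as $g_1A\cap g_2B$ is already a singleton, this triple intersection is exactly $\{g_3\}$, so $(g_1A,g_2B,g_3C)$ lies in the middle set. Conversely, if $g_1A\cap g_2B\cap g_3C=\{g\}$, then $g\in g_1A\cap g_2B$, so by the observation above $g_1A\cap g_2B=\{g\}$, and $g\in gC$ forces $g_3C=gC$; thus $(g_1A,g_2B,g_3C)=(g_1A,g_2B,gC)$ belongs to the third set. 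I do not anticipate any genuine obstacle here; the only points needing care are keeping the left-coset bookkeeping straight when invoking (G2), and recognising at the outset that (G2) already pins $|g_1A\cap g_2B|$ down to at most $1$.
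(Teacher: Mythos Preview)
Your proposal is correct. The paper states Corollary~\ref{handy} without a separate proof, treating it as an immediate consequence of the proof of Theorem~\ref{main}; your write-up simply makes explicit the details the paper leaves implicit, in particular the observation from (G2) that any two cosets $g_1A$ and $g_2B$ meet in at most one point, which is exactly the natural way to justify the second equivalence.
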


It should be noted that this construction does not produce {\em every}
latin bitrade, as latin bitrades in general may have rows and columns
with varying sizes.  However, as the rest of the paper demonstrates,
this technique produces 
many interesting examples.

\begin{example}
Let $G = \langle s, t \mid s^3 = t^2 = 1, ts = s^2t \rangle$, the 
symmetric group on three letters. 
$A = \langle s \rangle$,
$B = \langle t \rangle$,
$C = \langle ts^2 \rangle$. Then {\rm (G1)}, {\rm (G2)} and {\rm (G3)} are satisfied.
The left cosets are:
$\langle s \rangle $, $t \langle s \rangle $;
$\langle t \rangle $, $s \langle t \rangle $, $s^2 \langle t \rangle $;
$\langle ts^2 \rangle $, $s \langle ts^2 \rangle $, $s^2 \langle ts^2 \rangle $.
Then the latin bitrade is 
\[
T^{\circ}  =
\begin{tabular}{|c||c|c|c|}
\hline $\circ$ & $B$ & $sB$ & $s^2B$ \\
\hline \hline $A$ & $C$ & $sC$ & $s^2C$\\
\hline $tA$ & $s^2C$ & $C$ & $sC$\\
\hline
\end{tabular}
\quad
T^{\star} = 
\begin{tabular}{|c||c|c|c|}
\hline $\star$ & $B$ & $sB$ & $s^2B$ \\
\hline \hline $A$ & $s^2C$ & $C$ & $sC$\\
\hline $tA$ & $C$ & $sC$ & $s^2C$\\
\hline
\end{tabular}.
\]
Note that this latin bitrade is isotopic to the one given in Example $\ref{egg1}$. 
\end{example}

\section{Orthogonality, minimality and homogeneity}

In this section we describe how certain properties of latin bitrades constructed as in Theorem~\ref{main} may be 
encoded in the group structure. 

\begin{definition}
A latin bitrade $(T^{\circ}, T^{\star})$ is said to be {\em orthogonal}  
if whenever $i\circ j = i'\circ j'$ (for $i\neq i'$, $j\neq j'$), then
$i\star j\neq i' \star j'$.
\end{definition}  

We use the term orthogonal because if 
$(T^{\circ}, T^{\star})$ is a latin bitrade and 
$T^{\circ}\subset L_1$, 
$T^{\star}\subset L_2$, where $L_1$ and $L_2$ are {\em mutually orthogonal
latin squares} (see \cite{crc} for a definition), then 
$(T^{\circ}, T^{\star})$ is {\em orthogonal}.

\begin{lemma} \label{lem:orthog}
A latin bitrade $(T^{\circ}, T^{\star})$ constructed from a
group $G = \langle a, b, c \rangle$ as in Theorem~$\ref{main}$ 
is orthogonal if and only if $|C \cap C^{a}| = 1$.
\end{lemma}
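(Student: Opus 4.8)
The plan is to unpack the definition of orthogonality in terms of the coset description of the bitrade provided by Theorem~\ref{main} and Corollary~\ref{handy}. Recall that a row is a left coset $g_1A$, a column is a left coset $g_2B$, and an entry is a left coset $g_3C$; moreover $(g_1A)\circ(g_2B)$ is the unique entry $gC$ with $g\in g_1A\cap g_2B$, while $(g_1A)\star(g_2B)$ is $ga^{-1}C$ for that same $g$. Suppose $i\circ j=i'\circ j'$ with $i=g_1A\neq i'=g_1'A$ and $j=g_2B\neq j'=g_2'B$. Write $g$ for the unique element of $g_1A\cap g_2B$ and $g'$ for the unique element of $g_1'A\cap g_2'B$. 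Then $gC=g'C$, i.e. $g^{-1}g'\in C$, while $g\notin g_1'A$ (else the rows would coincide) and similarly $g\notin g_2'B$. The condition $i\star j\neq i'\star j'$ then reads $ga^{-1}C\neq g'a^{-1}C$, i.e. $ag^{-1}g'a^{-1}\notin C$. So orthogonality fails exactly when there exist such $g,g'$ with both $g^{-1}g'\in C$ and $ag^{-1}g'a^{-1}\in C$, i.e. with $g^{-1}g'\in C\cap C^{a}$ (using $C^a=a^{-1}Ca$, consistent with the left-to-right convention where $C^a$ denotes the conjugate), and with $g^{-1}g'$ of the right sort to actually give distinct rows and columns.

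First I would show the ``if'' direction: assume $|C\cap C^{a}|=1$, so $C\cap C^{a}=\{1\}$. If $i\circ j=i'\circ j'$ as above, the element $x:=g^{-1}g'$ lies in $C\cap C^{a}$, hence $x=1$, hence $g=g'$; but then $g\in g_1'A\cap g_2'B$, forcing $g_1A=g_1'A$ and $g_2B=g_2'B$, contradicting $i\neq i'$. So in fact no such configuration with $i\neq i'$, $j\neq j'$ occurs at all, and orthogonality holds vacuously-plus-genuinely. Conversely, for the ``only if'' direction, suppose $|C\cap C^{a}|>1$ and pick $x\in C\cap C^a$ with $x\neq 1$. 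Set $g=1$ and $g'=x$. Then $g_3C:=C=g'C$ so the entries agree, and $ga^{-1}C=a^{-1}C$ while $g'a^{-1}C=xa^{-1}C$; since $axa^{-1}\in C$ and $x\neq 1$ one checks $xa^{-1}C\neq a^{-1}C$ precisely when $axa^{-1}\ne x$... — and here is where care is needed: I must choose the witnessing $x$ so that the rows $A$ and $xA$ are distinct and the columns $B$ and $xB$ are distinct, not merely the entries. This is the main obstacle. If $x\in A$ then $g=1$ and $g'=x$ give the same row; if $x\in B$ they give the same column. But $x\in C$, and by (G2) $C\cap A=C\cap B=\{1\}$, so $x\neq 1$ forces $x\notin A$ and $x\notin B$ automatically. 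Hence $A\neq xA$ and $B\neq xB$, while $C=xC$ and (since $x\in C^a$ means $a^{-1}xa\in C$, equivalently, reading conjugation in the chosen convention) $a^{-1}C\ne xa^{-1}C$, exhibiting a genuine violation of orthogonality.

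Thus the only genuinely delicate point is bookkeeping the direction of conjugation to match the left-to-right composition convention ($x\rho$ for images), and confirming via (G2) that a nontrivial element of $C\cap C^a$ is automatically outside $A$ and $B$ so that it produces an honest pair of distinct rows and distinct columns rather than a degenerate one. Everything else is a direct translation between the $\circ/\star$ operations and cosets using Corollary~\ref{handy}. I would write the argument by fixing $g=1$ in the converse (legitimate since the whole configuration can be left-translated by $g^{-1}$, and left translation is an isotopism of the bitrade as noted after Definition~\ref{tdef}), which keeps the computation to a single line.
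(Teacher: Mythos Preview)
Your approach is the same as the paper's: translate orthogonality into a coset condition and reduce to whether $g^{-1}g'$ lies in $C\cap C^{a}$. You are in fact more careful than the paper on one point, namely verifying via (G2) that a nontrivial $x\in C$ automatically lies outside $A$ and $B$, so that the witnessing cells really do have distinct rows and distinct columns; the paper's converse direction simply says ``reverse the steps'' and leaves this implicit.

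However, your final line in the converse direction has the inequality reversed, and the conjugation direction is muddled. To exhibit a \emph{failure} of orthogonality you need $i\star j = i'\star j'$, i.e.\ $a^{-1}C = xa^{-1}C$, not $a^{-1}C\neq xa^{-1}C$. And with $C^{a}=a^{-1}Ca$, the statement $x\in C^{a}$ means $axa^{-1}\in C$ (not $a^{-1}xa\in C$), which is exactly what gives $xa^{-1}C=a^{-1}C$. You had this right in your opening paragraph (``orthogonality fails exactly when \ldots\ $ag^{-1}g'a^{-1}\in C$''), so this is a slip in execution rather than in strategy; just flip the $\neq$ to $=$ and correct the conjugation, and the argument goes through.
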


\begin{proof}
First suppose that the latin bitrade is not orthogonal.
Then $gC = hC$ and $ga^{-1}C = ha^{-1}C$ for some $g,h\in G$ with $g\neq h$,
as shown in the following diagram: 
\begin{center}
\begin{tabular}{ccc}

	\begin{tabular}{c|cc}
	$\circ$ & $gB$ & $hB$ \\
	\hline $gA$    & $gC$ &      \\
	       $hA$    &      & $hC$
	\end{tabular}
&
$\quad$
&
	\begin{tabular}{c|cc}
	$\star$ & $gB$ & $hB$ \\
	\hline $gA$    & $ga^{-1}C$ &      \\
	       $hA$    &      & $ha^{-1}C$
	\end{tabular}
\end{tabular}
\end{center}
Then
$g^{-1}h \in C$ and $ag^{-1}ha^{-1} \in C$ which 
implies that $g^{-1}h \in a^{-1}C a = C^{a}$. Thus
$|C\cap C^{a}|\neq 1$.

Conversely, suppose that $x\in C\cap C^{a}$ where 
$x\neq 1$. Then 
we may write $x=h^{-1}g$ for some non-identity elements $g,h\in G$.
We then reverse the steps in the previous paragraph to show that the latin bitrade is not orthogonal.
\end{proof}

Is it possible to encode minimality via our group construction?
We do this by encoding a ``thin'' property of latin bitrades, which,
together with the primary property, implies minimality.

\begin{definition}
\label{defn:thin}
A latin bitrade $(T^{\circ}, T^{\star})$ is said to be {\em thin}  
if whenever $i\circ j = i'\circ j'$ (for $i\neq i'$, $j\neq j'$), then
$i\star j'$ is either undefined, or 
$i\star j'= i\circ j$.
\end{definition}

\begin{lemma}
Let 
$(T^{\circ}, T^{\star})$ be a thin and primary latin bitrade.  
Then $T^{\circ}$  is a minimal latin trade.  
\label{tpm}
\end{lemma}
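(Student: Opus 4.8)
The plan is to argue by contradiction: suppose $T^{\circ}$ is not minimal, so there is a latin bitrade $(U^{\circ}, U^{\star})$ with $U^{\circ} \subsetneq T^{\circ}$, and we must derive a contradiction with either thinness or primariness. The idea is that $U^{\star}$ need not be contained in $T^{\star}$ a priori, but the thin hypothesis should force exactly that, after which the primary hypothesis finishes the job since a primary bitrade has no proper sub-bitrade sitting inside both coordinates.

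First I would set up notation. Write the latin trade operations as $i \circ j$ for $T^{\circ}$ and $i \circ' j$ for $U^{\circ}$, so $U^{\circ} \subseteq T^{\circ}$ means that whenever $i \circ' j$ is defined it equals $i \circ j$. Let $i \star j$ and $i \star' j$ denote the entries of $T^{\star}$ and $U^{\star}$. The key claim is that $(i, j, i \star' j) \in T^{\star}$ for every cell $(i,j)$ of $U^{\star}$, i.e.\ $U^{\star} \subseteq T^{\star}$. To see this, fix a cell $(i,j)$ of $U^{\circ} = U^{\star}$. By (R2)/(R3) applied to the bitrade $(U^{\circ}, U^{\star})$, the symbol $k = i \star' j$ also occurs in $U^{\circ}$ in some cell $(i', j)$ of the same column and some cell $(i, j')$ of the same row, with $i' \neq i$ and $j' \neq j$; moreover $k = i \circ' j' = i' \circ' j \neq i \circ' j$. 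Since $U^{\circ} \subseteq T^{\circ}$, these equalities hold in $T^{\circ}$ as well: $i \circ j' = i' \circ j = k$ and $i \circ j \neq k$. Now apply the thin property of $(T^{\circ}, T^{\star})$ to the equality $i \circ j' = i' \circ j$ (with $i \neq i'$, $j \neq j'$): it gives that $i \star j$ is either undefined or equals $i \circ j'  = k$. But the cell $(i,j)$ is occupied in $T^{\circ} \supseteq U^{\circ}$, hence $i \star j$ is defined (by (R2), every occupied cell of $T^{\circ}$ is occupied in $T^{\star}$). Therefore $i \star j = k = i \star' j$, which is exactly the statement that the cell $(i,j,k)$ of $U^{\star}$ lies in $T^{\star}$.

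Having shown $U^{\star} \subseteq T^{\star}$ together with $U^{\circ} \subseteq T^{\circ}$, the definition of \emph{primary} applied to $(T^{\circ}, T^{\star})$ forces $(U^{\circ}, U^{\star}) = (T^{\circ}, T^{\star})$, in particular $U^{\circ} = T^{\circ}$, contradicting $U^{\circ} \subsetneq T^{\circ}$. Hence no such proper sub-bitrade exists and $T^{\circ}$ is minimal.

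The step I expect to be the main obstacle is the careful bookkeeping in the middle paragraph: one must be sure that the cells $(i', j)$ and $(i, j')$ produced inside $U^{\circ}$ are genuinely distinct from $(i,j)$ (so that the hypotheses $i \neq i'$, $j \neq j'$ of the thin property are met) and that the symbol $k$ there really does differ from $i \circ j$, so that thinness is applicable and not vacuous. This is where (R1)--(R3) for $(U^{\circ}, U^{\star})$ must be invoked precisely: disjointness guarantees $i \star' j \neq i \circ' j$, and the row/column conditions guarantee the existence and distinctness of the witnessing cells. Everything else is a direct translation of the definitions, so once this dichotomy ($i \star j$ undefined vs.\ $i \star j = i \circ j'$) is pinned down and the ``undefined'' branch is excluded via (R2) for $(T^{\circ}, T^{\star})$, the proof closes.
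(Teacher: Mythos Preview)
Your proof is correct and follows essentially the same route as the paper's: assume a bitrade $(U^{\circ},U^{\star})$ with $U^{\circ}\subsetneq T^{\circ}$, use the row/column witnesses from (R2)/(R3) in $(U^{\circ},U^{\star})$ together with thinness of $(T^{\circ},T^{\star})$ to force $U^{\star}\subseteq T^{\star}$, and then invoke primariness for the contradiction. Your write-up is in fact more careful than the paper's about justifying $i\neq i'$, $j\neq j'$ and excluding the ``undefined'' branch via (R2); no further ideas are needed.
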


\begin{proof}
Suppose, for the sake of contradiction, that
$(T^{\circ}, T^{\star})$
is thin but not minimal.  
Then there exists a latin bitrade 
$(U^{\circ}, U^{\otimes})$ such that
$U^{\circ}\subset T^{\circ}$.
Since $(U^{\circ}, U^{\otimes})$ is a latin bitrade, then 
for any $i$, $j$, $k$ such that
$i \otimes j = k$,
there are $i'$, $j'$ such that
$i \circ j' = i' \circ j = k$,
where $i \neq i'$ and $j \neq j'$.
By thinness of 
$(T^{\circ}, T^{\star})$, it follows that
$i \star j$ is either undefined or $i \star j = k$. However,
$i \otimes j$ is defined, so $i \circ j$ is defined in both $U^{\circ}$
and $T^{\circ}$. So $i \star j = k$ and we see that
$U^{\otimes} \subset T^{\star}$, contradicting the primary property.
\end{proof}

In general, the minimality of latin bitrades can be complicated 
to check, (see, for example, \cite{CaDoYa})  highlighting the elegance
of the following lemma.

\begin{lemma} \label{lem:thingroupbitrade}
A latin bitrade $(T^{\circ}, T^{\star})$ constructed from a
group $G = \langle a, b, c \rangle$ as in Theorem~$\ref{main}$ 
is {\em thin} (and thus minimal) if and only if
the only solutions to the equation
$a^i b^j c^k = 1$ are
$(i,j,k) = (0,0,0)$ and 
$(i,j,k) = (1,1,1)$, where $i$, $j$ and $k$ are calculated 
modulo $|A|$, $|B|$ and $|C|$, respectively.
\end{lemma}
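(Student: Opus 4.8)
The plan is to unravel the definition of thinness directly in terms of cosets, using that for each $h\in G$ left translation $x\mapsto hx$ is an isotopism of the bitrade onto itself: it sends $(gA,gB,gC)\in T^{\circ}$ to $(hgA,hgB,hgC)\in T^{\circ}$ and $(gA,gB,ga^{-1}C)\in T^{\star}$ to $(hgA,hgB,hga^{-1}C)\in T^{\star}$, so thinness is invariant under it. By Corollary~\ref{handy} an occupied cell of $T^{\circ}$ has the form $(gA,gB)$ with $T^{\circ}$-entry $gC$ and $T^{\star}$-entry $ga^{-1}C$. Two occupied cells $(g_{1}A,g_{1}B)$ and $(g_{2}A,g_{2}B)$ carry the same $T^{\circ}$-entry exactly when $g_{1}^{-1}g_{2}\in C$, and they lie in different rows and different columns exactly when $g_{1}^{-1}g_{2}\notin A\cup B$, which for an element of $C$ just means $g_{1}\neq g_{2}$. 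Hence, after translating by $g_{1}^{-1}$, I may assume the two cells are $(A,B)$ (entry $C$) and $(c^{m}A,c^{m}B)$ (entry $c^{m}C=C$) for some $m\not\equiv 0\pmod{|C|}$. The thin condition for this pair concerns the two mixed cells $(A,c^{m}B)$ and $(c^{m}A,B)$; since translating by $c^{m}$ interchanges the two cells of the pair and turns a requirement on $(c^{m}A,B)$ into one on $(A,c^{-m}B)$, it suffices to show: for every $m\not\equiv 0\pmod{|C|}$, if the cell $(A,c^{m}B)$ is occupied then its $T^{\star}$-entry is $C$.

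Next I would identify both of these. The cell $(A,c^{m}B)$ is occupied iff $A\cap c^{m}B\neq\emptyset$ iff $c^{m}\in AB$, i.e.\ iff $c^{m}=a^{i}b^{j}$ for some $i,j$, equivalently $a^{i}b^{j}c^{-m}=1$, which is a solution of $a^{i}b^{j}c^{k}=1$ with $k=-m\not\equiv 0\pmod{|C|}$. When it is occupied, the unique $g$ with $gA=A$ and $gB=c^{m}B$ is $g=a^{i}$ (indeed $a^{-i}c^{m}=b^{j}\in B$, so $a^{i}B=c^{m}B$), so by Definition~\ref{tdef} the $T^{\star}$-entry of the cell is $ga^{-1}C=a^{i-1}C$. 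Since $a^{i-1}\in A$, this equals $C$ iff $a^{i-1}\in A\cap C=\{1\}$ iff $i\equiv 1\pmod{|A|}$. Therefore the bitrade is thin if and only if every solution $(i,j,k)$ of $a^{i}b^{j}c^{k}=1$ with $k\not\equiv 0\pmod{|C|}$ satisfies $i\equiv 1\pmod{|A|}$.

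It remains to check that this condition is equivalent to the stated one, that $(0,0,0)$ and $(1,1,1)$ are the only solutions of $a^{i}b^{j}c^{k}=1$. One direction is immediate, since a solution with $k\not\equiv 0$ must then be $(1,1,1)$. For the converse, assume the condition above and let $(i,j,k)$ be any solution. If $k\equiv 0$ then $a^{i}=b^{-j}\in A\cap B=\{1\}$, forcing $(i,j,k)=(0,0,0)$. If $k\not\equiv 0$ then our assumption gives $i\equiv 1$, so $ab^{j}c^{k}=1$; substituting $a=c^{-1}b^{-1}$ (from (G1)) yields $b^{j-1}=c^{1-k}\in B\cap C=\{1\}$ by (G2), so $j\equiv 1\pmod{|B|}$, $k\equiv 1\pmod{|C|}$, i.e.\ $(i,j,k)=(1,1,1)$; and $(0,0,0)$, $(1,1,1)$ are indeed solutions, the latter by (G1). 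Finally, since (G3) holds the bitrade is primary by Theorem~\ref{main}, so a thin such bitrade is minimal by Lemma~\ref{tpm}.

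The argument is essentially bookkeeping; the one delicate point is the reduction in the first paragraph, namely verifying that the $m\mapsto -m$ translation symmetry really lets the single family of mixed cells $(A,c^{m}B)$ capture both mixed-cell requirements of the thin condition, while keeping all exponents consistently reduced modulo $|A|$, $|B|$ and $|C|$.
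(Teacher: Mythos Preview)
Your proof is correct and follows essentially the same route as the paper's: both translate thinness into the coset condition ``if $gC=hC$ and the mixed cell $(gA,hB)$ is occupied, its $T^{\star}$-entry must be $gC$'', identify occupancy of that cell with a relation $a^{i}b^{j}c^{k}=1$, and then use $abc=1$ together with (G2) to pin down $(i,j,k)$. Your only real difference is cosmetic: you exploit the left-translation isotopy to normalise $g=1$ (and the $m\mapsto -m$ symmetry to handle both mixed cells at once), whereas the paper works directly with arbitrary $g,h$; this tidies the bookkeeping but does not change the argument.
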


\begin{proof}
We first rewrite Definition~\ref{defn:thin} in terms of cosets. 
Let $i = g_1A$, $j = g_2B$ for some $g_1$, $g_2 \in G$. Since $i \circ j$ must
be defined it follows that $g_1A$ and $g_2B$ intersect. By
Corollary~\ref{handy} this intersection is a unique element $g \in G$.
Thus $i = gA$, $j = gB$. Similarly, $i' = hA$, $j' = hB$ for a unique $h
\in G$. For a latin bitrade to be thin we must have
$i \star j' = i \circ j$ whenever $i \star j'$ is defined. In other
words, the latin bitrade is thin if $gC=hC$ implies that
$gC=xa^{-1}C$ whenever there exists (a unique) $x \in gA \cap hB$.

\begin{center}
\begin{tabular}{ccc}
	\begin{tabular}{c|cc}
	$\circ$ & $gB$ & $hB$ \\
	\hline $gA$    & $gC$ & $xC$     \\
	       $hA$    &      & $hC$
	\end{tabular}
&
$\quad$
&
	\begin{tabular}{c|cc}
	$\star$ & $gB$ & $hB$ \\
	\hline $gA$    & $ga^{-1}C$ &  $xa^{-1}C$    \\
	       $hA$    &      & $ha^{-1}C$
	\end{tabular}
\end{tabular}
\end{center} 

First suppose that the only solutions to $a^i b^j c^k = 1$ are $(i,j,k)
= (0,0,0)$ and $(i,j,k) = (1,1,1)$. To check for thinness, suppose that
$gC = hC$ and that there exists an $x \in gA \cap hB$. Then
$x = ga^{m} = hb^{-n}$ for some $m$, $n$.
Now
$a^{m} b^{n} = g^{-1} h = c^{-p}$
for some $p$ since $gC = hC$ implies that $g^{-1}h \in C$.
So $a^{m} b^{n} c^{p} = 1$. 
If $m=n=p=0$ then $g = h$ which is a contradiction. Otherwise
$(m,n,p) = (1,1,1)$ so $x = ga$. Now
$g^{-1} xa^{-1} = g^{-1} gaa^{-1} = 1 \in C$ so $gC = xa^{-1}C$
as required.

Conversely, suppose that the latin bitrade is thin and that
$a^m b^n c^p = 1$ for some $m$, $n$, $p$. There is always a trivial
solution $(0,0,0)$ so it suffices to check that $(1,1,1)$ is the only
other possibility.
Since $a^m b^n c^p = 1$ we can 
write
$g a^m = (g c^{-p}) b^{-n}$
for any $g \in G$.
Define $h = g c^{-p}$ and $x = g a^m = h b^{-n}$.
Now $hC = g c^{-p} C = gC$ and by definition
$x \in gA \cap hB$ so $gA \star hB$ is defined. Now thinness implies
that $gC = xa^{-1}C$, so $g^{-1}xa^{-1} \in C$. 
Then $g^{-1} g a^m  a^{-1} = a^{m-1} \in C$ so $m=1$.
Since $abc=1$,
\[
1 = a b^n c^p = c^{-1} b^{-1} b^n c^p = c^{-1} b^{n-1} c^p
\Rightarrow b^{n-1} c^{p-1} = 1
\]
so $b^{n-1} = c^{1-p}$ and therefore $(m,n,p) = (1,1,1)$. 
\end{proof}

\begin{definition}
A latin trade $T^{\circ}$ is said to be {\em ($k$-)homogeneous} if each
row and column contains precisely $k$ entries and each entry occurs
precisely $k$ times within $T^{\circ}$.  
\end{definition}

The next lemma follows from Theorem~\ref{main}.

\begin{lemma}
A latin bitrade $(T^{\circ}, T^{\star})$ constructed from a
group $G = \langle a, b, c \rangle$ as in Theorem~$\ref{main}$ 
is $k$-homogeneous if and only if $|A|=|B|=|C|=k$.
\end{lemma}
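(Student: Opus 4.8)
The plan is to read this off directly from Theorem~\ref{main}, since that theorem already records the row, column and entry multiplicities of the constructed bitrade in terms of $|A|$, $|B|$ and $|C|$, and the notion of $k$-homogeneity is precisely the statement that all three of these multiplicities equal $k$.

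First I would recall from Theorem~\ref{main} that the latin bitrade $(T^{\circ},T^{\star})$ has exactly $|G:A|$ rows, each containing $|A|$ entries; exactly $|G:B|$ columns, each containing $|B|$ entries; and exactly $|G:C|$ distinct entries, each occurring $|C|$ times. In particular every row of $T^{\circ}$ has the same size $|A|$, every column has the same size $|B|$, and every symbol of $A_3$ occurs with the same multiplicity $|C|$.

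Next I would compare this with the definition of $k$-homogeneity, namely that every row and every column of $T^{\circ}$ contains precisely $k$ entries and every entry occurs precisely $k$ times. For the ``if'' direction, assuming $|A|=|B|=|C|=k$, the three multiplicities supplied by Theorem~\ref{main} are all equal to $k$, so $(T^{\circ},T^{\star})$ is $k$-homogeneous. For the ``only if'' direction, if $(T^{\circ},T^{\star})$ is $k$-homogeneous then each row contains $k$ entries, which by Theorem~\ref{main} forces $|A|=k$; likewise the column condition gives $|B|=k$ and the entry-multiplicity condition gives $|C|=k$.

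I do not expect any genuine obstacle: the entire content is already packaged in Theorem~\ref{main}, and all that remains is to match the three clauses of the homogeneity definition against the three multiplicities $|A|$, $|B|$, $|C|$. The only point worth a word of care is that homogeneity is stated for the trade $T^{\circ}$ rather than its mate $T^{\star}$, but since $T^{\circ}$ and $T^{\star}$ occupy the same cells and, by conditions (R2)--(R3), have the same entry set in each row and column, they share all row sizes, column sizes and entry multiplicities, so the distinction is immaterial.
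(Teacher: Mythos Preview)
Your proposal is correct and takes essentially the same approach as the paper, which simply states that the lemma follows from Theorem~\ref{main} without further elaboration. Your argument spells out exactly the matching of the row, column, and entry multiplicities $|A|$, $|B|$, $|C|$ from Theorem~\ref{main} against the three clauses of the $k$-homogeneity definition, which is all that is needed.
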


A $2$-homogeneous latin bitrade is trivially the union of latin squares of order $2$. 
A construction for $3$-homogeneous latin bitrades is given in \cite{CaDoDr}; 
moreover in \cite{Ca20} it is shown that this construction gives every possible primary $3$-homogeneous latin bitrade. 
The problem of determining the spectrum of sizes of $k$-homogeneous latin bitrades is solved in \cite{BBKM}; however if we add the condition 
of minimality this problem becomes far more complex.
Some progress towards this has been made in \cite{CaDoDr2} and \cite{CaDoYa}; however it is even an open problem to determine the possible sizes of a minimal $4$-homogeneous latin bitrade. 
The theorems in the following section yield previously unknown cases of minimal $k$-homogeneous latin bitrades.   

\section{Examples}

In this section we apply Theorem~\ref{main} to generate bitrades from
various groups.  All of the bitrades constructed will be primary, so
by Lemma~\ref{tpm} thinness will imply minimality for each example.

\subsection{Abelian groups}

An abelian group $G$ has the normaliser $N_G(C)$ equal to the entire
group so $C = C^{a}$. By Lemma~\ref{lem:orthog} abelian groups will not
generate orthogonal bitrades. The next lemma gives an example of a latin
bitrade constructed from an abelian group.

\begin{lemma}
Let $p$ be a prime.
Then $G = ({\mathbb Z}_p \times {\mathbb Z}_p,+)$ generates a latin bitrade
$(T^{\circ}, T^{\star})$ 
using $a=(0,1)$, $b=(1,0)$ and $c=(p-1,p-1)$.
\end{lemma}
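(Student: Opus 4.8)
The plan is to verify that the data $a=(0,1)$, $b=(1,0)$, $c=(p-1,p-1)$ satisfy the hypotheses of Theorem~\ref{main}, after which the conclusion is immediate. Writing $G$ additively, the conditions of Definition~\ref{tdef} together with (G3) become: $a,b,c$ are non-zero; $a+b+c=(0,0)$; the cyclic subgroups $A=\langle a\rangle$, $B=\langle b\rangle$, $C=\langle c\rangle$ pairwise meet only in the identity; and $\langle a,b,c\rangle=G$.

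First I would dispose of the easy parts. Each of $a,b,c$ is non-zero (note $p-1\ge 1$ since $p$ is prime), and
\[
a+b+c=(0,1)+(1,0)+(p-1,p-1)=(p,p)=(0,0),
\]
so (G1) holds. Since $\langle a,b\rangle$ already contains $(0,1)$ and $(1,0)$, it is all of $\mathbb{Z}_p\times\mathbb{Z}_p$, giving (G3).

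The one point meriting a moment's thought is (G2), where primality of $p$ is used. Each of $a,b,c$ has order $p$, so $A=\{0\}\times\mathbb{Z}_p$, $B=\mathbb{Z}_p\times\{0\}$ and $C=\{(t,t)\mid t\in\mathbb{Z}_p\}$ are three subgroups of order $p$ inside the group $G$ of order $p^2$. They are pairwise distinct (for instance $a\in A\setminus(B\cup C)$, and similarly for the others), and two distinct subgroups of order $p$ in any group intersect in a subgroup whose order properly divides $p$ and is therefore $1$. Hence $|A\cap B|=|A\cap C|=|B\cap C|=1$, which is (G2).

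With (G1), (G2) and (G3) in hand, Theorem~\ref{main} immediately yields that $(T^{\circ},T^{\star})$, defined from these $A,B,C$ as in Definition~\ref{tdef}, is a primary latin bitrade of size $|G|=p^2$ with $|G:A|=p$ rows, $|G:B|=p$ columns and $|G:C|=p$ entries. There is no real obstacle: beyond the (routine) observation that the three ``lines through the origin'' are genuinely distinct order-$p$ subgroups with trivial pairwise intersection, the verification is pure substitution into Theorem~\ref{main}.
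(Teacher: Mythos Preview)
Your proof is correct and follows the same approach as the paper: verify (G1), (G2), (G3) and invoke Theorem~\ref{main}. The only cosmetic difference is that for (G2) you argue via Lagrange (distinct order-$p$ subgroups meet trivially), whereas the paper checks the three pairwise intersections by hand; both are routine.
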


\begin{proof}
First, $(0,1)+(1,0)+(p-1,p-1) = (0,0)$ so (G1) is met. For (G2):
\begin{itemize}

\item $A \cap B = \{ (0,0) \}$.

\item Note that $(0,x) \in C$ only when $x = 0$ so $A$ and
$C$ intersect in the single element $(0,0)$.

\item There is no $(x,0) \in B \cap C$ with $x \neq 0$ by similar
reasoning.
\end{itemize}
Lastly, $G = \langle a, b, c \rangle$ follows from the definition of $a$
and $b$ so (G3) is satisfied.  
\end{proof}

The latin bitrade $T^{\circ}$ in the above lemma is in fact a latin square so in some sense this example is degenerate.

\subsection{A $p^3$-group example}

It is well known (see, for example, \cite{hall}~p. 52) 
that for any odd prime $p$ there exists  
a non-abelian group $G$ of order $p^3$, with
generators $a$, $b$, $c$ and relations
\begin{align}
&a^p = b^p = c^p = 1, \label{eqn:nonabelian:1}\\ 
&ab = bac, \label{eqn:nonabelian:2}\\
&ca = ac, \label{eqn:nonabelian:3}\\
&cb = bc. \label{eqn:nonabelian:4}
\end{align}
For convenience we let $z = c^{-1}$ throughout this section.

\begin{lemma}\label{lem:canonicalp3}
Any word $w \in G$ can be written in the form
$a^i b^j c^k$. Further, the group operation can be defined in terms of
the canonical representation
\[
(a^i b^j z^k) (a^r b^s z^t) = a^{i+r} b^{j+s} z^{k+t+jr}.  
\]
\end{lemma}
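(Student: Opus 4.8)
The plan is to establish the canonical form first and then derive the multiplication rule, since the latter is really just the former applied to a specific product. For the canonical form, I would argue that every element of $G$ can be written as $a^i b^j c^k$ by showing that the set $S = \{a^i b^j c^k \mid 0 \le i,j,k < p\}$ is closed under right multiplication by the generators $a$, $b$, $c$ (equivalently by $a$, $b$, $z$); since $S$ contains the identity and $G = \langle a, b, c\rangle$, this forces $S = G$. The relations (\ref{eqn:nonabelian:3}) and (\ref{eqn:nonabelian:4}) say $c$ is central, so multiplying $a^i b^j c^k$ on the right by $c$ (or $z$) is immediate, and multiplying on the right by $a$ only requires pushing one $a$ leftward past the block $b^j$. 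Relation (\ref{eqn:nonabelian:2}), rewritten as $ba = abc^{-1} = abz$, or its inverse form, lets us move $a$ past a single $b$ at the cost of a central factor of $z$; iterating past $b^j$ yields $b^j a = a b^j z^{j}$ (here using centrality of $z$ to collect the factors). Since $c^p = z^p = 1$, all exponents live modulo $p$, so $S$ is genuinely finite of size at most $p^3$; combined with $|G| = p^3$ this also shows the representation is unique, though uniqueness is not strictly needed for the statement.

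For the multiplication formula, I would compute $(a^i b^j z^k)(a^r b^s z^t)$ directly using these commutation facts. Because $z$ is central, the factors $z^k$ and $z^t$ can be pulled out to the right immediately, leaving $a^i (b^j a^r) b^s$ times $z^{k+t}$. Applying the identity $b^j a^r = a^r b^j z^{jr}$ (which follows by induction from $ba = abz$, again using centrality of $z$ to accumulate the exponent $jr$) gives $a^i a^r b^j b^s z^{jr} z^{k+t} = a^{i+r} b^{j+s} z^{k+t+jr}$, as claimed. So the core computation is the single lemma $b^j a^r = a^r b^j z^{jr}$, best proved by a double induction (or by induction on $r$ with the inner step already handled by induction on $j$).

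The main obstacle, such as it is, is purely bookkeeping: keeping track of how many factors of $z$ accumulate when commuting a power of $a$ past a power of $b$, and making sure the exponent arithmetic is consistently interpreted modulo $p$. There is a mild subtlety in that $ab = bac$ gives $ba = ab c^{-1}$ only after using that $c = c^{-1}\cdot c^2$... more carefully, from $ab = bac$ we get $a b c^{-1} = ba$, i.e. $ba = abz$, which is the clean form I want; one should state this rearrangement explicitly rather than quoting (\ref{eqn:nonabelian:2}) verbatim. Beyond that, the proof is a routine verification, and I would present it as: (1) rearrange the relations into the forms $ba = abz$ and ``$z$ central''; (2) prove $b^j a^r = a^r b^j z^{jr}$ by induction; (3) deduce closure of $S$ under the generators, hence $S = G$; (4) plug into the product $(a^i b^j z^k)(a^r b^s z^t)$ to read off the formula.
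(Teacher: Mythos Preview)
The paper does not actually supply a proof of this lemma: it is stated as a standard fact about the Heisenberg group of order $p^3$ and is used without further comment. Your proposal is a correct and complete verification of exactly the kind the authors evidently had in mind. The key identity $b^j a^r = a^r b^j z^{jr}$, obtained by induction from $ba = abz$ together with the centrality of $z$, is the only nontrivial ingredient, and you isolate it cleanly; the multiplication formula then falls out immediately. Your brief hesitation over rearranging $ab = bac$ into $ba = abz$ resolves itself correctly, so there is nothing to fix.
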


\begin{lemma} \label{lem:gammak}
Let $\gamma = b^{-1} a^{-1}$. Then
$\gamma^k = a^{-k} b^{-k} z^{k(k+1)/2}$ and $\gamma$ has 
order $p$.
\end{lemma}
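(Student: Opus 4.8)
The plan is to prove the formula for $\gamma^k$ by induction on $k$, using the multiplication rule from Lemma~\ref{lem:canonicalp3}, and then read off the order of $\gamma$ from the case $k = p$.

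First I would record that $\gamma = b^{-1}a^{-1}$ already has the canonical form $a^0 b^{-1} z^0$ multiplied by $a^{-1} b^0 z^0$; applying the multiplication rule $(a^i b^j z^k)(a^r b^s z^t) = a^{i+r}b^{j+s}z^{k+t+jr}$ with $(i,j,k) = (0,-1,0)$ and $(r,s,t) = (-1,0,0)$ gives $\gamma = a^{-1}b^{-1}z^{1}$, since the correction term is $jr = (-1)(-1) = 1$. This checks the claimed formula at $k = 1$, because $1\cdot 2/2 = 1$. The base case $k = 0$ is the identity $a^0 b^0 z^0 = 1$, and $0\cdot 1/2 = 0$, so that is consistent as well.

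For the inductive step I would assume $\gamma^k = a^{-k}b^{-k}z^{k(k+1)/2}$ and compute $\gamma^{k+1} = \gamma^k \cdot \gamma = (a^{-k}b^{-k}z^{k(k+1)/2})(a^{-1}b^{-1}z^{1})$. The multiplication rule, with $(i,j) = (-k,-k)$ on the left and $(r,s,t) = (-1,-1,1)$ on the right, yields exponent of $a$ equal to $-k-1$, exponent of $b$ equal to $-k-1$, and exponent of $z$ equal to $k(k+1)/2 + 1 + (-k)(-1) = k(k+1)/2 + 1 + k$. It then remains to verify the purely arithmetic identity $k(k+1)/2 + k + 1 = (k+1)(k+2)/2$, which is immediate. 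This completes the induction and establishes $\gamma^k = a^{-k}b^{-k}z^{k(k+1)/2}$ for all $k \geq 0$.

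Finally, to get the order of $\gamma$, I would substitute $k = p$: the exponents of $a$ and $b$ become $-p \equiv 0 \pmod p$ by \eqref{eqn:nonabelian:1}, and the exponent of $z$ is $p(p+1)/2$; since $p$ is odd, $(p+1)/2$ is an integer, so $p(p+1)/2$ is a multiple of $p$ and $z^{p(p+1)/2} = 1$ as well. Hence $\gamma^p = 1$. Since $\gamma \neq 1$ (its $a$-exponent is $-1 \not\equiv 0 \pmod p$) and $p$ is prime, the order of $\gamma$ is exactly $p$. The only subtlety worth a sentence is the use of oddness of $p$ to ensure $(p+1)/2$ is an integer before reducing modulo $p$; this is the one place the hypothesis "odd prime" is genuinely needed, and it is not really an obstacle so much as a point to flag.
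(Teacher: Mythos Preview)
Your proof is correct and follows essentially the same inductive strategy as the paper, using the canonical multiplication rule from Lemma~\ref{lem:canonicalp3}. The only cosmetic difference is that the paper first inducts on $(a^{-1}b^{-1})^k$ and then multiplies by the central $z^k$, whereas you put $\gamma$ into canonical form first and induct on $\gamma^k$ directly; your version is marginally more streamlined, and your explicit remark that $\gamma \neq 1$ (so the order is exactly $p$, not merely a divisor) fills a small gap the paper leaves implicit.
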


\begin{proof} 
First we show that  
$(a^{-1} b^{-1})^k = a^{-k} b^{-k} z^{k(k-1)/2}$ 
by induction on $k$. When $k=1$ the statement is true. The inductive
step is:
\begin{align*}
(a^{-1} b^{-1})^{k+1} &= (a^{-1} b^{-1})  (a^{-1} b^{-1})^{k} 
= (a^{-1} b^{-1})  a^{-k} b^{-k} z^{k(k-1)/2} \\
&= a^{-(k+1)} b^{-(k+1)} z^{k(k-1)/2 + k} 
= a^{-(k+1)} b^{-(k+1)} z^{(k+1)k/2}.
\end{align*}
Now we can evaluate $\gamma^k$:
\begin{align}
\gamma^k 	&= (b^{-1} a^{-1})^k = (a^{-1} b^{-1} z)^k = (a^{-1} b^{-1})^k z^k  \nonumber \\
         	&= a^{-k} b^{-k} z^{k + k(k-1)/2} 
		= a^{-k} b^{-k} z^{k(k+1)/2}. \nonumber%\label{eqn:nonabelian:gamma}
\end{align} 
Since
$\gamma^p = a^{-p} b^{-p} z^{p(p+1)/2} = 1$ we see that
$\gamma$ has order $p$.  
\end{proof}

\begin{theorem} \label{p3}
Let $\alpha = a$, $\beta = b$, $\gamma = b^{-1} a^{-1}$
where $a$, $b$, and $c$ generate a group satisfying
\eqref{eqn:nonabelian:1} through \eqref{eqn:nonabelian:4}.  
Then $\alpha$, $\beta$ and $\gamma$ satisfy conditions {\rm (G1)}, {\rm (G2)} and {\rm (G3)} of Theorem~$\ref{main}$. 
Thus for each prime $p$, there exists a primary, $p$-homogeneous latin bitrade of size $p^3$
given by 
\[(
\{
(g\langle \alpha \rangle,
g\langle \beta \rangle,
g\langle \gamma \rangle) \mid g\in G \}, 
\{
(g\langle \alpha \rangle,
g\langle \beta \rangle,
g\alpha^{-1}\langle \gamma \rangle) \mid g\in G \}).\] 
\end{theorem}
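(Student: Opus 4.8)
The plan is to verify that the elements $\alpha = a$, $\beta = b$, $\gamma = b^{-1}a^{-1}$ satisfy hypotheses (G1), (G2) and (G3) of Theorem~\ref{main}, and then quote that theorem together with the $k$-homogeneity lemma. Condition (G1) is immediate, since $\alpha\beta\gamma = a\,b\,b^{-1}a^{-1} = 1$. For (G3), note $\langle\alpha,\beta,\gamma\rangle \supseteq \langle a,b\rangle$, and by \eqref{eqn:nonabelian:2} the commutator $a^{-1}b^{-1}ab$ equals $c$; since $a$, $b$, $c$ generate $G$, this gives $\langle\alpha,\beta,\gamma\rangle = G$.

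The substance of the argument is (G2): that $A = \langle a\rangle$, $B = \langle b\rangle$, $C = \langle\gamma\rangle$ pairwise intersect only in the identity. First I would record that each of $A$, $B$, $C$ has order exactly $p$ --- for $a$ and $b$ this is \eqref{eqn:nonabelian:1} together with $a,b\neq 1$, and for $\gamma$ it is Lemma~\ref{lem:gammak} together with $\gamma\neq 1$ (were $\gamma = b^{-1}a^{-1} = 1$ we would have $a = b^{-1}$, i.e. $a^1b^0z^0 = a^0b^{-1}z^0$, impossible by uniqueness of the canonical form). Here I use that, by Lemma~\ref{lem:canonicalp3}, every element of $G$ has the shape $a^ib^jz^k$, and since there are only $p^3 = |G|$ such words with $0\le i,j,k<p$ this representation is unique. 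Writing $A = \{a^ib^0z^0\}$, $B = \{a^0b^jz^0\}$ and, via Lemma~\ref{lem:gammak}, $C = \{a^{-k}b^{-k}z^{k(k+1)/2} : 0\le k<p\}$, the three intersections are settled by comparing exponents in canonical form: $a^i = b^j$ forces $i\equiv j\equiv 0$; an element of $A\cap C$ has $b$-exponent both $0$ and $-k$, forcing $k\equiv 0\pmod p$ and hence the element is the identity; and an element of $B\cap C$ has $a$-exponent both $0$ and $-k$, again forcing $k\equiv 0$ and hence the identity. Thus (G2) holds.

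With (G1)--(G3) in hand, Theorem~\ref{main} applied with $(\alpha,\beta,\gamma)$ in place of $(a,b,c)$ shows that the displayed pair $(T^{\circ},T^{\star})$ --- which is exactly the pair of Definition~\ref{tdef} --- is a primary latin bitrade of size $|G| = p^3$, with $|G:A| = |G:B| = |G:C| = p^2$ rows, columns and entries. Since $|A| = |B| = |C| = p$, the $k$-homogeneity lemma then gives that $T^{\circ}$ is $p$-homogeneous, which is the assertion.

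I do not anticipate a genuine obstacle: the whole argument reduces to the exponent arithmetic in (G2), and the only point needing care is that $\gamma$ has order exactly $p$ rather than $1$, handled above via uniqueness of the canonical representation. Everything else is a direct appeal to Theorem~\ref{main} and the homogeneity lemma, so the bulk of the write-up will just be making the three coset computations explicit.
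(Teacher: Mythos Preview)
Your proposal is correct and follows the same overall strategy as the paper: verify (G1)--(G3) and invoke Theorem~\ref{main} (together with the homogeneity lemma). The only noteworthy difference is in the handling of (G2). You appeal directly to the uniqueness of the canonical form $a^ib^jz^k$ from Lemma~\ref{lem:canonicalp3} (established by your counting argument $|G|=p^3$) and simply compare exponents, which is clean and fully rigorous. The paper instead argues that any two of $\langle\alpha\rangle$, $\langle\beta\rangle$, $\langle\gamma\rangle$ are cyclic of prime order $p$, so a nontrivial intersection would force them to coincide; it then derives a contradiction from relation~\eqref{eqn:nonabelian:2} (for instance, $\langle a\rangle=\langle b\rangle$ would give $b=a^r$ and hence $c=1$). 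Your route is a little more direct once the canonical form is in hand; the paper's is slightly more self-contained in that it avoids the uniqueness step. Either way the remaining appeals to Theorem~\ref{main} and the homogeneity lemma are identical.
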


\begin{proof}
By definition $\alpha \beta \gamma = 1$ so {\rm (G1)} is true. For (G2):
\begin{itemize}

\item The element $a$ is of order $p$ so any non-identity element of
$\langle a \rangle$ generates $\langle a \rangle$. The same holds for
$b$ and $\langle b \rangle$. If 
$a^m = b^n$ for some $0 < m, n < p$ then
$\langle a \rangle = \langle b \rangle$.
So $b = a^r$ for some $r$ and~\eqref{eqn:nonabelian:2} 
becomes
$a^{r+1} = a^{r+1}c$ so 
$c=1$, a contradiction. Hence 
$\langle \alpha \rangle \cap \langle \beta \rangle = 1$.
 
\item  The subgroup $\langle a \rangle$ has order $p$ and by
Lemma~\ref{lem:gammak} so does $\langle \gamma \rangle$.  If $a^l =
\gamma^k$ for some $0 < l,k < p$ then 
$\langle a \rangle = \langle \gamma \rangle$. The argument is now
similar to the first case.
Hence $\langle \alpha \rangle \cap \langle \gamma \rangle = 1$.

\item Showing that
$\langle \beta \rangle \cap \langle \gamma \rangle = 1$ is very similar
to the first case.

\end{itemize}
Lastly,
$G = \langle \alpha, \beta, \gamma \rangle$ since
$a = \alpha$, $b = \beta$, and 
$c = \alpha^{-1}\beta^{-1}\gamma^{-1}$.
Thus (G3) is satisfied.
\end{proof}

\begin{lemma} \label{lemma:ab:centre}
Let $G$ be a finite group and $g$, $h \in G$. If the product $gh$ is in  $Z(G)$ then 
$gh = hg$.
\end{lemma}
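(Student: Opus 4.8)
The plan is to use nothing more than the defining property of the centre together with cancellation in a group. Since $gh \in Z(G)$, the element $gh$ commutes with every element of $G$; the key observation is that it is enough to invoke this for a single carefully chosen element, namely $h$ itself (the choice $g$ works equally well).

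Concretely, from $gh \in Z(G)$ one obtains $(gh)h = h(gh)$. Expanding both sides gives $ghh = hgh$, and cancelling the common factor $h$ on the right (legitimate in any group) yields $gh = hg$, which is exactly the desired conclusion. Symmetrically, one could instead commute $gh$ past $g$, getting $ghg = ggh$ and left-cancelling $g$.

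The only point that requires a moment's thought is the choice of which element to pair $gh$ with: pairing it with $h$ (or with $g$) produces an equation in which a single cancellation isolates the identity $gh = hg$, whereas pairing it with a generic element of $G$ would not immediately help. I expect this to be the "main obstacle," though it is a very mild one. Note also that the finiteness of $G$ plays no role here, so the statement holds for arbitrary groups; I would nevertheless leave it phrased for finite $G$, since that is the setting in which it is applied in the sequel.
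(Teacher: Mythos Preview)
Your proof is correct and follows essentially the same approach as the paper: both use that $gh$ commutes with a single well-chosen element and then cancel. The paper happens to pick $g^{-1}$ (writing $(gh)g^{-1} = g^{-1}(gh) = h$, hence $gh = hg$), whereas you pick $h$, but the idea is identical.
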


\begin{proof}
Since $gh \in Z(G)$ it must commute with any element of $G$. Thus
$(gh) g^{-1} = g^{-1} (gh) = h$
so $gh = hg$.
\end{proof}

\begin{lemma} \label{lemma:p3:centre}
Let $G$ be the group defined by~\eqref{eqn:nonabelian:1}
through~\eqref{eqn:nonabelian:4}. Then
$Z(G) = \langle c \rangle$.
\end{lemma}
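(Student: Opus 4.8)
The plan is to prove the two inclusions $\langle c\rangle\subseteq Z(G)$ and $Z(G)\subseteq\langle c\rangle$ separately. The first inclusion requires nothing beyond reading off relations~\eqref{eqn:nonabelian:3} and~\eqref{eqn:nonabelian:4}: these say precisely that $c$ commutes with both generators $a$ and $b$, hence $c$ commutes with every element of $G=\langle a,b,c\rangle$, so $c$ and all of its powers lie in $Z(G)$.

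For the reverse inclusion I would argue directly from the canonical form supplied by Lemma~\ref{lem:canonicalp3}. Since $|G|=p^3$, each $g\in G$ is uniquely $g=a^ib^jz^k$ with $0\le i,j,k<p$, and the multiplication rule of that lemma reads $(a^ib^jz^k)(a^rb^sz^t)=a^{i+r}b^{j+s}z^{k+t+jr}$. Applying it twice gives $ga=a^{i+1}b^jz^{k+j}$ and $ag=a^{i+1}b^jz^{k}$, so (using that $z=c^{-1}$ has order $p$) $g$ commutes with $a$ exactly when $j\equiv 0\pmod p$; symmetrically $gb=a^ib^{j+1}z^{k}$ and $bg=a^ib^{j+1}z^{k+i}$, so $g$ commutes with $b$ exactly when $i\equiv 0\pmod p$. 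Therefore any $g\in Z(G)$ has $i=j=0$, i.e.\ $g=z^k\in\langle c\rangle$, and combining with the previous paragraph yields $Z(G)=\langle c\rangle$.

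I expect no genuine obstacle here; the argument is a short bookkeeping computation. The only points that deserve a line of justification are that $c$ has order exactly $p$ (so $\langle c\rangle$ has order $p$) and that $|G|=p^3$ with the representation $a^ib^jc^k$ unique, both of which follow from the setup together with Lemma~\ref{lem:canonicalp3}. If one prefers to avoid the computation entirely, there is a structural alternative worth noting: $G$ is non-abelian, since if it were abelian then $ab=ba$ and~\eqref{eqn:nonabelian:2} would force $c=1$, contradicting $|G|=p^3$; hence $Z(G)\ne G$, so $|Z(G)|\in\{1,p,p^2\}$, and $|Z(G)|=p^2$ is impossible because then $G/Z(G)$ would be cyclic, forcing $G$ abelian; thus $|Z(G)|=p$, and since $\langle c\rangle\subseteq Z(G)$ has order $p$, equality follows. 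I would present the computational proof as the main argument and mention the structural one as a remark.
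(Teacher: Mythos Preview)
Your proof is correct. Both approaches establish $\langle c\rangle\subseteq Z(G)$ from relations~\eqref{eqn:nonabelian:3}--\eqref{eqn:nonabelian:4} and then use the canonical form of Lemma~\ref{lem:canonicalp3} for the reverse inclusion, but the details differ slightly. The paper argues by contradiction: given $w=a^ib^jc^k\in Z(G)$, it multiplies by $c^{-k}$ to reduce to $a^ib^j\in Z(G)$, invokes Lemma~\ref{lemma:ab:centre} to obtain $a^ib^j=b^ja^i$, and then uses relation~\eqref{eqn:nonabelian:2} to force $p\mid ij$, leading to a contradiction in each case. You instead compute $ga$, $ag$, $gb$, $bg$ directly from the multiplication formula in Lemma~\ref{lem:canonicalp3}, reading off $j\equiv 0$ and $i\equiv 0$ immediately. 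Your route is marginally more self-contained in that it bypasses Lemma~\ref{lemma:ab:centre} altogether; the paper's route is perhaps more in keeping with its sequence of lemmas. The structural alternative you sketch (via $G/Z(G)$ cyclic $\Rightarrow$ $G$ abelian) is also sound and standard.
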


\begin{proof}
Since $c$ is in $Z(G)$ we know that $\langle c \rangle \leq Z(G)$. For
the converse,
suppose that there is a
group element $w$ in $Z(G)$ but $w \notin \langle c \rangle$.
By Lemma~\ref{lem:canonicalp3} we can write 
$w = a^i b^j c^k$ for some $i$, $j$, $k$.
Then $(a^i b^j c^k) c^{-k} \in Z(G)$ 
since $c \in Z(G)$, which means that
$a^i b^j \in Z(G)$.
By Lemma~\ref{lemma:ab:centre}, $a^i b^j = b^j a^{i}$.  
However, using~\eqref{eqn:nonabelian:2} we
have $a^i b^j = b^j a^i c^{ij}$ so it must be that $p \mid ij$. 
If $p \mid i$ then $w = b^j c^k$ which implies that 
$b \in Z(G)$, a contradiction. Similarly, if $p \mid j$ then
$a \in Z(G)$, another contradiction.  
\end{proof}

\begin{lemma}
Latin bitrades constructed as in Theorem~$\ref{p3}$ are thin.
\end{lemma}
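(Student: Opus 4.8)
The plan is to invoke Lemma~\ref{lem:thingroupbitrade} directly. In the notation of that lemma, the latin bitrade of Theorem~\ref{p3} is built from the generating triple $(\alpha,\beta,\gamma) = (a,\,b,\,b^{-1}a^{-1})$, and by Theorem~\ref{p3} together with Lemma~\ref{lem:gammak} each of $\langle\alpha\rangle$, $\langle\beta\rangle$, $\langle\gamma\rangle$ has order $p$. So it suffices to show that the only solutions of $\alpha^i\beta^j\gamma^k = 1$, with $i,j,k$ taken modulo $p$, are $(i,j,k) = (0,0,0)$ and $(i,j,k) = (1,1,1)$.

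First I would rewrite $\gamma^k$ via Lemma~\ref{lem:gammak}, obtaining $\alpha^i\beta^j\gamma^k = a^i b^j a^{-k} b^{-k} z^{k(k+1)/2}$ with $z = c^{-1}$. Then I would bring the right-hand side into the canonical form $a^\bullet b^\bullet z^\bullet$ using the multiplication rule of Lemma~\ref{lem:canonicalp3}: multiplying $a^i b^j$ by $a^{-k} b^{-k}$ produces $a^{i-k} b^{j-k} z^{-jk}$, and absorbing the trailing central factor $z^{k(k+1)/2}$ yields
\[
\alpha^i\beta^j\gamma^k \;=\; a^{\,i-k}\, b^{\,j-k}\, z^{\,k(k+1)/2 \,-\, jk}.
\]
By uniqueness of the canonical representation, setting this equal to $1 = a^0 b^0 z^0$ forces $i \equiv k \pmod p$, $j \equiv k \pmod p$, and $k(k+1)/2 - jk \equiv 0 \pmod p$. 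Substituting $i = j = k$ into the last congruence gives $k(k+1)/2 - k^2 = k(1-k)/2 \equiv 0 \pmod p$; since $p$ is an odd prime, $2$ is invertible modulo $p$, so $k(1-k) \equiv 0$, i.e.\ $k \equiv 0$ or $k \equiv 1 \pmod p$. These recover precisely the two admissible solutions $(0,0,0)$ and $(1,1,1)$, and Lemma~\ref{lem:thingroupbitrade} then gives thinness.

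The only genuine work is the canonical-form bookkeeping in the second step — in particular keeping track of the $z$-exponent generated when the block $a^{-k}$ is commuted past $b^j$, and correctly combining it with the $z^{k(k+1)/2}$ coming from $\gamma^k$. I also want to flag that the oddness of $p$ is used essentially (and only) to divide the final congruence by $2$; if $p=2$ the argument would break down, which is consistent with the earlier remark that $2$-homogeneous bitrades are degenerate. Everything else is an immediate consequence of Lemmas~\ref{lem:thingroupbitrade}, \ref{lem:canonicalp3} and \ref{lem:gammak}.
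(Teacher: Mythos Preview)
Your argument is correct and follows the same overall strategy as the paper: rewrite $\alpha^i\beta^j\gamma^k$ in the canonical form $a^{\bullet}b^{\bullet}z^{\bullet}$ using Lemmas~\ref{lem:canonicalp3} and~\ref{lem:gammak}, then read off constraints on the exponents. Your canonical-form computation matches the paper's exactly, arriving at $a^{i-k}b^{j-k}z^{k(k+1)/2 - jk}=1$.

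The one difference worth noting is how the conclusion is extracted. You invoke uniqueness of the canonical representation directly to force $i\equiv k$, $j\equiv k$, and then handle the $z$-exponent. The paper instead argues that $a^{i-k}b^{j-k}$ must lie in $Z(G)=\langle c\rangle$ (Lemma~\ref{lemma:p3:centre}), hence commutes with itself in reversed order (Lemma~\ref{lemma:ab:centre}), and then uses the commutation relation to deduce $c^{(i-k)(j-k)}=1$, splitting into cases according to which factor $p$ divides. Your route is shorter and avoids these two auxiliary lemmas entirely; the only extra ingredient you need is that the canonical form is unique, which is immediate from the count $|G|=p^3$ even though Lemma~\ref{lem:canonicalp3} as stated only asserts existence. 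Both approaches are valid, and yours is arguably the more natural one.
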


\begin{proof}
Suppose that $\alpha^i \beta^j \gamma^k = 1$ for some $i$, $j$, $k$.
Then
by Lemma~\ref{lem:canonicalp3} and \ref{lem:gammak}
\begin{align}
1 &= a^i b^j \gamma^k = a^i b^j \left( a^{-k} b^{-k} z^{k(k+1)/2} \right) \nonumber \\
  &= a^{i-k} b^{j-k} z^{{k(k+1)/2} - jk}. \label{eqn:thin:p3}
\end{align}
By Lemma~\ref{lemma:p3:centre},
$a^{i-k} b^{j-k} \in Z(G)$
and by Lemma~\ref{lemma:ab:centre},
$a^{i-k} b^{j-k} = b^{j-k} a^{i-k}$.
Using~\eqref{eqn:nonabelian:2} a total of $(i-k)(j-k)$ times we have
$a^{i-k} b^{j-k} = b^{j-k} a^{i-k} c^{(i-k)(j-k)}$ so
$c^{(i-k)(j-k)} = 1$. Since $p$ is prime there are
two cases:
\begin{enumerate}
\item If $p \mid i-k$ then~\eqref{eqn:thin:p3} reduces to
$1 = b^{j-k} z^{ {k(k+1)/2}- jk}$ so
$p \mid j-k$.

\item If $p \mid j-k$ then~\eqref{eqn:thin:p3} reduces to
$1 = a^{i-k} z^{ {k(k+1)/2} - jk}$ so
$p \mid i-k$.  
\end{enumerate} 
Thus $p \mid i-k$ and $p \mid j-k$. Now
$i \equiv k \pmod{p}$ and $j \equiv k \pmod{p}$ which implies
$i\equiv j\equiv k \pmod{p}$, and~\eqref{eqn:thin:p3}
becomes
\[
1 = z^{ {k(k+1)/2} - k^2} \quad \Rightarrow \quad 1 = z^{ k(k-1)/2}, 
\]
so $p \mid \frac{1}{2}k(k-1)$.
If $p \mid k$ then $(i,j,k) \equiv (0,0,0)$. Otherwise,
$p \mid \frac{1}{2}(k-1)$ and 
$(i,j,k) \equiv (1,1,1)$. By Lemma~\ref{lem:thingroupbitrade} the latin bitrade is thin.
\end{proof}

\begin{lemma}
The latin bitrade constructed in Theorem~$\ref{p3}$ is orthogonal.
\end{lemma}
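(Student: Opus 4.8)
The plan is to apply Lemma~\ref{lem:orthog}, which says that the latin bitrade from $G=\langle\alpha,\beta,\gamma\rangle$ is orthogonal precisely when $|C\cap C^{\alpha}|=1$, where $C=\langle\gamma\rangle$ and $C^{\alpha}=\alpha^{-1}C\alpha$. So the entire task reduces to showing that $\langle\gamma\rangle\cap\alpha^{-1}\langle\gamma\rangle\alpha=\{1\}$, i.e.\ that conjugating the cyclic group $\langle\gamma\rangle$ by $\alpha=a$ moves it off itself except at the identity. Since $\langle\gamma\rangle$ has order $p$ (Lemma~\ref{lem:gammak}) and $p$ is prime, it suffices to show $\langle\gamma\rangle^{a}\neq\langle\gamma\rangle$, because two distinct subgroups of prime order intersect trivially.

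First I would compute $\gamma^{a}=a^{-1}\gamma a=a^{-1}(b^{-1}a^{-1})a=a^{-1}b^{-1}a$. Using the canonical form and Lemma~\ref{lem:canonicalp3} (or directly relation~\eqref{eqn:nonabelian:2} in the form $ab=bac$, equivalently $b^{-1}a^{-1}=a^{-1}b^{-1}z$, and tracking how $a$ conjugates $b^{-1}$), I would rewrite $a^{-1}b^{-1}a$ in the canonical form $a^{i}b^{j}z^{k}$. The key computation is that $b^{-1}a=ab^{-1}z^{?}$: from $ab=bac$ we get $b^{-1}a^{-1}=a^{-1}b^{-1}z$, and hence $a^{-1}b^{-1}a = (b^{-1}a^{-1}z^{-1})\cdot$ \dots; more cleanly, $a b^{-1} = b^{-1} a c^{-1} = b^{-1} a z$ (conjugating relation~\eqref{eqn:nonabelian:2}), so $a^{-1} b^{-1} a = b^{-1} z^{?}$. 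I would carry this out carefully to get $\gamma^{a}$ in canonical form, expected to be something like $b^{-1}z^{m}$ for an explicit nonzero $m$ (I anticipate $\gamma^a = b^{-1} c = b^{-1} z^{-1}$ or similar after simplification).

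Then I would compare $\langle\gamma^{a}\rangle$ with $\langle\gamma\rangle$. Using Lemma~\ref{lem:gammak}, every element of $\langle\gamma\rangle$ has canonical form $a^{-k}b^{-k}z^{k(k+1)/2}$, so its exponent of $a$ equals its exponent of $b$ (both $\equiv -k\bmod p$). If $\gamma^{a}$ has canonical form with exponent of $a$ equal to $0$ but exponent of $b$ equal to a nonzero value (which is what the computation $\gamma^a=b^{-1}z^m$ gives), then $\gamma^{a}\notin\langle\gamma\rangle$: the only element of $\langle\gamma\rangle$ with zero $a$-exponent is $\gamma^0=1\ne\gamma^a$. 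Hence $\langle\gamma\rangle^{a}\neq\langle\gamma\rangle$, so by primality $\langle\gamma\rangle\cap\langle\gamma\rangle^{a}=\{1\}$, i.e.\ $|C\cap C^{a}|=1$, and Lemma~\ref{lem:orthog} gives orthogonality.

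The main obstacle is purely bookkeeping: getting the conjugate $a^{-1}b^{-1}a$ into canonical form without sign or exponent errors, since the group's multiplication rule (Lemma~\ref{lem:canonicalp3}) mixes $a$, $b$, $z$ in a non-obvious way and one must be careful about left-to-right composition conventions and the substitution $z=c^{-1}$. There is no conceptual difficulty — once the canonical form of $\gamma^a$ is in hand, the comparison with $\langle\gamma\rangle$ is immediate from the shape of its elements in Lemma~\ref{lem:gammak}. A sanity check is that $\alpha=a$ does \emph{not} normalise $\langle\gamma\rangle$ precisely because $\gamma$ involves $b$, and $a$ fails to commute with $b$; this is exactly the non-abelianness that an abelian group (as in Section~4.1) lacks, consistent with the remark that abelian groups never yield orthogonal bitrades.
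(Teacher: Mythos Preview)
Your strategy is correct and matches the paper's: invoke Lemma~\ref{lem:orthog}, note that $\langle\gamma\rangle$ and $\langle\gamma\rangle^{\alpha}$ both have prime order $p$, so it suffices to show they are distinct. The paper argues by contradiction (assuming $\alpha^{-1}\gamma^{k}\alpha=\gamma$ for some $k$, deducing $k\equiv 1$, then $ab=ba$), while you plan the equivalent direct computation of $\gamma^{\alpha}$ and comparison with the canonical form of $\gamma^{k}$ from Lemma~\ref{lem:gammak}.

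However, you have an algebra slip at the very first step, and your anticipated canonical form is wrong. You write $\gamma^{a}=a^{-1}(b^{-1}a^{-1})a=a^{-1}b^{-1}a$, but the inner $a^{-1}$ cancels the trailing $a$, so in fact
\[
\gamma^{a}=a^{-1}b^{-1}a^{-1}a=a^{-1}b^{-1},
\]
already in canonical form $a^{-1}b^{-1}z^{0}$. No further commutation gymnastics are needed, and in particular $\gamma^{a}$ does \emph{not} have zero $a$-exponent as you predicted. Your comparison step still works, though: by Lemma~\ref{lem:gammak}, $\gamma^{k}=a^{-k}b^{-k}z^{k(k+1)/2}$, so matching the $a$- and $b$-exponents forces $k\equiv 1\pmod p$, but then the $z$-exponent is $1\ne 0$. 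Hence $\gamma^{a}\notin\langle\gamma\rangle$, so $\langle\gamma\rangle^{a}\ne\langle\gamma\rangle$, and you are done. Once the slip is corrected, your route is actually shorter than the paper's.
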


\begin{proof}
Suppose for the sake of contradiction (see Lemma \ref{lem:orthog}) that 
$\langle \gamma \rangle$ and $\langle \gamma \rangle^{\alpha}$ have a
nontrivial intersection.
Since $\langle \gamma \rangle$ 
and
$\langle \gamma \rangle^{\alpha}$ 
are cyclic of order $p$, it must be that
$\langle \gamma \rangle = \langle \gamma \rangle^{\alpha}$ so
$\alpha^{-1} \gamma^k \alpha = \gamma$ for some $k$. Hence
\[\begin{array}{lrcl}
& \alpha^{-1} \gamma^k \alpha &= &  
b^{-1} a^{-1} \\
\Rightarrow \quad \quad \quad & (ab) a^{-1} \left( a^{-k} b^{-k} z^{{k(k+1)/2} } \right) a &= & 1 \\
\Rightarrow \quad & ab a^{-k} b^{-k} z^{{k(k+1)/2}  - k} &= & 1 \\
\Rightarrow \quad & a^{-k+1} b^{-k+1} z^{{k(k+1)/2}  - 2k} &= & 1.
\end{array}\]
Now $a^{-k+1} b^{-k+1} \in Z(G)$ so by
Lemma~\ref{lemma:ab:centre},
$a^{-k+1} b^{-k+1} = b^{-k+1} a^{-k+1}$. From~\eqref{eqn:nonabelian:2}
we can deduce that 
$a^{-k+1} b^{-k+1} = b^{-k+1} a^{-k+1} c^{(-k+1)(-k+1)}$ hence
$k \equiv 1 \pmod{p}$. Now
\[
a^{-1} \gamma a = \gamma \quad \Rightarrow \quad  
a^{-1} b^{-1} a^{-1} a = b^{-1} a^{-1} 
\quad 
\Rightarrow \quad a b = b a,\]
which is a contradiction since $a$ and $b$ do not commute with each
other.
\end{proof}

\subsection{ $|G|=pq$, where $p$ and $q$ are primes and $G$ is non-abelian.}

Let $p$ and $q$ be primes such that $p>q>2$ and $q$ divides $p-1$. 
Let $G=\langle a,b \rangle$ be the non-abelian group of order $pq$ 
defined by:
\begin{align}
&a^p = b^q = 1 \textnormal{ \ \ and } \label{eqn:pq:1}\\ 
&b^{-1}ab = a^r, \label{eqn:pq:2}
\end{align}
where $r \in \{2,3,\ldots, p-1\}$
is some solution to $r^q\equiv 1 \pmod{p}$.
The following remark may be verified by induction. 

\begin{remark}
Let $G$ be the group defined by~\eqref{eqn:pq:1} and \eqref{eqn:pq:2}.
Then for any integers $i$, $j$, $k$, $l$:
\begin{align}
&b^ia^jb^ka^l=b^{i+k}a^m, \label{eqn:pq:3} \\ 
&(ab)^k = b^k a^{r(r^k-1)/(r-1)}, \label{lem:pq:abt} \\
&(bab)^k = b^{2k} a^{ r (r^{2k} - 1) / (r^2 - 1)  }, \label{eqn:bab} 
\end{align}
where $m=jr^k+l$.
\end{remark}

\begin{theorem} \label{pq} 
Let $\alpha=b$, $\beta=ab$ and $\gamma=b^{-1}a^{-1}b^{-1}$
where $a$ and $b$ generate a group that satisfies
\eqref{eqn:pq:1} and \eqref{eqn:pq:2}.
Then $\alpha$, $\beta$ and $\gamma$ satisfy conditions {\rm (G1)}, {\rm (G2)} and {\rm (G3)} of Theorem~$\ref{main}$. 
Thus for each pair of primes $p,q$ such that $q>2$ and $q$ divides $p-1$, there exists a $q$-homogeneous latin bitrade of size $pq$
given by 
\[(
\{
(g\langle \alpha \rangle,
g\langle \beta \rangle,
g\langle \gamma \rangle) \mid g\in G \}, 
\{
(g\langle \alpha \rangle,
g\langle \beta \rangle,
g\alpha^{-1}\langle \gamma \rangle) \mid g\in G \}).\] 
\end{theorem}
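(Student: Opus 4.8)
The plan is to verify conditions (G1), (G2) and (G3) of Theorem~\ref{main} for $\alpha=b$, $\beta=ab$, $\gamma=b^{-1}a^{-1}b^{-1}$, and then read off the conclusion from that theorem together with the characterisation of $k$-homogeneity. Condition (G1) is a one-line cancellation: $\alpha\beta\gamma = b\,(ab)\,(b^{-1}a^{-1}b^{-1}) = b\,a\,a^{-1}\,b^{-1} = 1$. Note also that $a$, $b$ generate $G$, so (G3) will follow once we know $\langle\alpha,\beta,\gamma\rangle$ contains $a$ and $b$.

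Before checking (G2) I would pin down the orders of $\alpha,\beta,\gamma$. Clearly $\alpha=b$ has order $q$. The key observation is that $\gamma^{-1}=bab$, so the two identities \eqref{lem:pq:abt} and \eqref{eqn:bab} from the Remark apply directly with $k=q$: setting $N=r(r^q-1)/(r-1)$ and $M=r(r^{2q}-1)/(r^2-1)$ (both integers), we get $\beta^q=(ab)^q=b^q a^{N}=a^{N}$ and $(\gamma^{-1})^q=(bab)^q=b^{2q}a^{M}=a^{M}$, using $b^q=1$ from \eqref{eqn:pq:1}. Now $(r-1)N=r(r^q-1)\equiv 0 \pmod p$ since $r^q\equiv 1\pmod p$, and $p\nmid r-1$ because $2\le r\le p-1$; hence $p\mid N$ and $\beta^q=1$. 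Likewise $(r^2-1)M=r(r^{2q}-1)\equiv 0\pmod p$, and to conclude $p\mid M$ I need $p\nmid r^2-1=(r-1)(r+1)$. This is the only place the hypothesis $q>2$ is used: if $r\equiv-1\pmod p$ then $r^q\equiv(-1)^q=-1\pmod p$ because $q$ is odd, contradicting $r^q\equiv 1\pmod p$; so $r\not\equiv\pm 1\pmod p$, giving $\gamma^q=1$. Finally $\beta\neq 1$ and $\gamma\neq 1$: since $\gcd(p,q)=1$ we have $\langle a\rangle\cap\langle b\rangle=\{1\}$, so $ab=1$ (which would put $a\in\langle b\rangle$) and $bab=1$ (which would give $a=b^{-2}\in\langle b\rangle$) are both impossible. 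As $q$ is prime, $|\langle\alpha\rangle|=|\langle\beta\rangle|=|\langle\gamma\rangle|=q$.

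Now (G2). Because each of $\langle\alpha\rangle,\langle\beta\rangle,\langle\gamma\rangle$ has prime order $q$, any two of them either coincide or meet only in the identity; so it suffices to show the three subgroups are pairwise distinct. If $\langle\alpha\rangle=\langle\beta\rangle$, then $a=\beta\alpha^{-1}=(ab)b^{-1}$ lies in $\langle b\rangle=\langle\alpha\rangle$, contradicting $\langle a\rangle\cap\langle b\rangle=\{1\}$. If $\langle\alpha\rangle=\langle\gamma\rangle$, then $\gamma\in\langle b\rangle$ forces $a^{-1}=b\gamma b\in\langle b\rangle$, the same contradiction. If $\langle\beta\rangle=\langle\gamma\rangle=:H$, then by (G1) $\alpha=\gamma^{-1}\beta^{-1}\in H$, and since $H$ has prime order this gives $H=\langle\alpha\rangle$, i.e.\ $\langle\alpha\rangle=\langle\beta\rangle$, which we have just excluded. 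This establishes (G2), and (G3) is immediate since $b=\alpha$ and $a=\beta\alpha^{-1}$, so $\langle\alpha,\beta,\gamma\rangle\supseteq\langle a,b\rangle=G$.

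With (G1), (G2), (G3) verified, Theorem~\ref{main} shows that the displayed pair $(T^{\circ},T^{\star})$ is a (primary) latin bitrade of size $|G|=pq$, and since $|\langle\alpha\rangle|=|\langle\beta\rangle|=|\langle\gamma\rangle|=q$ the lemma characterising homogeneous bitrades makes it $q$-homogeneous. The main obstacle — and really the only step requiring care — is the order computation for $\gamma$: obtaining $(bab)^q=1$ needs $r^2\not\equiv 1\pmod p$, and ruling out $r\equiv -1$ is precisely where $q>2$ (equivalently, $q$ odd) must be invoked; everything else reduces to routine manipulation of the canonical form and the exponent bookkeeping supplied by the Remark.
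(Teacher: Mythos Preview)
Your proof is correct and follows the same overall plan as the paper (verify (G1)--(G3) and compute the orders of $\alpha,\beta,\gamma$ via the identities in the Remark), but you reorganise the work in a way worth noting. The paper checks (G2) first by direct computation --- in particular, for $\langle\beta\rangle\cap\langle\gamma\rangle$ it assumes $(ab)^k=b^{-1}a^{-1}b^{-1}$, rewrites $(ab)^{k+1}=b^{-1}$ using \eqref{lem:pq:abt}, deduces $k\equiv -2\pmod q$, and then derives $a=1$ from $(ab)^{-2}=b^{-1}a^{-1}b^{-1}$ --- and only afterwards computes $\beta^q=\gamma^q=1$ to obtain $q$-homogeneity. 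You instead front-load the order computation, which lets you invoke the clean fact that two subgroups of the same prime order either coincide or meet trivially; the troublesome case $\langle\beta\rangle=\langle\gamma\rangle$ is then dispatched in one line via (G1), since $\alpha=\gamma^{-1}\beta^{-1}$ would land in that common subgroup and force $\langle\alpha\rangle=\langle\beta\rangle$. This is a genuinely tidier route for the third intersection, at the cost of having to establish $|\gamma|=q$ up front (where you correctly isolate the role of $q>2$ in excluding $r\equiv -1\pmod p$). Both arguments rely on the same formulas \eqref{lem:pq:abt} and \eqref{eqn:bab}, so the difference is organisational rather than conceptual.
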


\begin{proof}
Clearly
$\alpha\beta\gamma=1$, thus satisfying (G1). 
It is also clear that $\alpha$ and $\beta$ together generate $G$, so $(G3)$ is satisfied.
We next check $(G2)$. 
\begin{itemize}
\item If 
$\langle \alpha \rangle \cap
\langle \beta \rangle \neq 1$,  
then $b^k=ab$ for some $k$. This implies that
$b^{k-1} = a$  so $a \in \langle b \rangle$. 
Then $a$ and $b$ must generate the same cyclic
subgroup of prime order, a contradiction since $p \neq q$.

\item 
If 
$\langle \alpha \rangle \cap
\langle \gamma \rangle \neq 1$, then
$b^k=b^{-1}a^{-1}b^{-1}$ for some $k$, or equivalently
$a^{-1}=b^{k+2}$. So $a \in \langle b \rangle$, a contradiction.

\item If 
$\langle \beta \rangle \cap
\langle \gamma \rangle \neq 1$,  
$(ab)^k=b^{-1}a^{-1}b^{-1}$ for some $k$.
Then
$(ab)^{k+1}=b^{-1}$. Therefore from Equation~\ref{lem:pq:abt} we have
$b^{k+1} a^{ r(r^{k+1}-1)/(r-1) } = b^{-1}$
so
$b^{k+2} a^{r(r^{k+1}-1)/(r-1)} = 1$. The subgroups 
$\langle a \rangle$
and
$\langle b \rangle$
only intersect in the identity, so $k\equiv -2 \pmod{q}$.
But
\[
(ab)^q =b^qa^{r(r^q-1)/(r-1)} = b^q = 1
\]
as $r^q\equiv 1 \pmod{p}$. 
Thus
$(ab)^k=(ab)^{-2} = b^{-1} a^{-1} b^{-1}$ which implies that $a=1$, a contradiction.
\end{itemize} 
It remains to show that this latin bitrade is $q$-homogeneous. 
To see this, note that
$\beta^q = (ab)^q = 1$.
Next, from Equation~\ref{eqn:bab},
\[\gamma^q = (b^{-1}a^{-1}b^{-1})^q = ((bab)^q)^{-1}=
\left( b^{2q}a^{ r (r^{2q} - 1) / (r^2 - 1)} \right)^{-1}= 1.\]
\end{proof}

\begin{lemma}
The latin bitrade constructed in Theorem~$\ref{pq}$ is orthogonal.
\end{lemma}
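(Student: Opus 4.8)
The plan is to verify the criterion of Lemma~\ref{lem:orthog}. In the notation of Theorem~\ref{main} the subgroup $C$ is here $\langle\gamma\rangle$ and the generator ``$a$'' is here $\alpha=b$, so orthogonality is equivalent to $|\langle\gamma\rangle\cap b^{-1}\langle\gamma\rangle b|=1$. The proof of Theorem~\ref{pq} already records that $\gamma^{q}=1$, and $\gamma\ne 1$ (otherwise $a^{-1}=b^{2}\in\langle b\rangle$, impossible since $\langle a\rangle$ and $\langle b\rangle$ have coprime orders $p\ne q$), so $\langle\gamma\rangle$ is cyclic of prime order $q$; the same holds for its conjugate $\langle\gamma\rangle^{\alpha}=b^{-1}\langle\gamma\rangle b$. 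Hence $\langle\gamma\rangle\cap\langle\gamma\rangle^{\alpha}$ is either trivial or equal to $\langle\gamma\rangle$, and in the second case $\langle\gamma\rangle=\langle\gamma\rangle^{\alpha}$; that is, $\alpha$ normalises $\langle\gamma\rangle$. So the whole task reduces to deriving a contradiction from $b^{-1}\langle\gamma\rangle b=\langle\gamma\rangle$.

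Under that assumption $b^{-1}\gamma b=\gamma^{k}$ for some $k\in\{0,1,\dots,q-1\}$, and I would then proceed in the same style as the orthogonality lemma for the $p^{3}$ case, rewriting this identity. The left-hand side collapses by pure cancellation: $b^{-1}\gamma b=b^{-1}(b^{-1}a^{-1}b^{-1})b=b^{-2}a^{-1}$. For the right-hand side I use that $\langle a\rangle$ is normal in $G$ (by~\eqref{eqn:pq:2}) with $G/\langle a\rangle\cong\mathbb{Z}_{q}$: modulo $\langle a\rangle$ we have $\gamma\equiv b^{-2}$, hence $\gamma^{k}\equiv b^{-2k}$, and comparing the images of $b^{-1}\gamma b=\gamma^{k}$ in the quotient gives $2k\equiv 2\pmod q$. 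Since $q$ is odd this forces $k\equiv 1\pmod q$, i.e. $k=1$.

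It then remains to unwind $b^{-2}a^{-1}=\gamma=b^{-1}a^{-1}b^{-1}$: left-multiplying by $b$ yields $b^{-1}a^{-1}=a^{-1}b^{-1}$, so $a$ and $b$ commute, contradicting the non-abelianness of $G$ (equivalently, contradicting~\eqref{eqn:pq:2} together with $r\ne 1$). Therefore $|\langle\gamma\rangle\cap\langle\gamma\rangle^{\alpha}|=1$, and the bitrade is orthogonal by Lemma~\ref{lem:orthog}.

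As for where the difficulty lies, there is essentially none once the reduction of the first paragraph is in place: the primality of $q$ upgrades ``nontrivial intersection'' to the far stronger ``$\alpha$ normalises $\langle\gamma\rangle$'', after which the computation is only a few lines. The points that need care are the bookkeeping in the non-abelian quotient $G/\langle a\rangle$ (which exponent is read modulo $p$ and which modulo $q$) and the use of $q$ being odd to solve $2k\equiv 2\pmod q$, both routine. An alternative that avoids passing to the quotient is to expand $\gamma^{k}$ into the canonical form $b^{-2k}a^{e_{k}}$ obtained by inverting~\eqref{eqn:bab} and match exponents directly, but the cancellation argument above appears cleanest.
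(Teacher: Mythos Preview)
Your proof is correct and follows essentially the same approach as the paper: reduce via Lemma~\ref{lem:orthog} and the primality of $q$ to the statement that $\alpha$ normalises $\langle\gamma\rangle$, force the conjugation exponent to be $1$, and derive a contradiction. Your execution is in fact a bit cleaner than the paper's --- you pass to the quotient $G/\langle a\rangle$ to read off $k=1$ and then obtain $ab=ba$ directly, whereas the paper expands $\gamma^{k}$ explicitly via~\eqref{eqn:bab} and reaches the equivalent contradiction $r\equiv 1\pmod p$ through a longer exponent-matching computation.
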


\begin{proof}
Suppose for the sake of contradiction (see Lemma \ref{lem:orthog}) that 
$\langle \gamma \rangle$ and $\langle \gamma \rangle^{\alpha}$ have a
nontrivial intersection.
Since $\langle \gamma \rangle$ 
and
$\langle \gamma \rangle^{\alpha}$ 
are cyclic of order $q$, it must be that
$\langle \gamma \rangle = \langle \gamma \rangle^{\alpha}$ so
$\alpha^{-1} \gamma^k \alpha = \gamma$ for some $k$. 
Observe that 
$\alpha^{-1}\gamma\alpha=b^{-1}(b^{-1}a^{-1}b^{-1})b=b^{-2}a^{-1}=
a^{-r^2}b^{-2}$.
So, for some integer $k$, we must have that 
$\gamma^k=(b^{-1}a^{-1}b^{-1})^k=a^{-r^2}b^{-2}$.
From Equation \ref{eqn:bab}, this implies that:
\[a^{(r^4-r^2-r^{2k+1}+r)/(r^2-1)} = b^{2k-2}.\]
Thus $k\equiv 1 \pmod{q}$, so $r^k\equiv r \pmod{p}$. 
So, 
\[a^{(r^4-r^2-r^{2k+1}+r)/(r^2-1)} = a^{r^2-r}.\]
Thus $r^2\equiv r \pmod{p}$, which, in turn, implies that
$r\equiv 1 \pmod{p}$,
a contradiction.
\end{proof}

\begin{lemma}
The latin bitrade constructed in Theorem~$\ref{pq}$ is thin if and only
if the solutions to 
\[r^j + r^{j-1} \equiv r^{i+j-1}+1 \pmod{p}\]
are precisely 
$i\equiv j\equiv 0 \pmod{q}$
and
$i\equiv j\equiv 1 \pmod{q}$.
\label{isitthin}
\end{lemma}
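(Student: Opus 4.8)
The plan is to reduce the claim to Lemma~\ref{lem:thingroupbitrade}. Since $\alpha=b$, $\beta=ab$ and $\gamma=b^{-1}a^{-1}b^{-1}$ all have order $q$ (this was established in the proof of Theorem~\ref{pq}), we have $|\langle\alpha\rangle|=|\langle\beta\rangle|=|\langle\gamma\rangle|=q$, so that lemma says the bitrade is thin if and only if the only solutions, modulo $q$, of $\alpha^i\beta^j\gamma^k=1$ are $(i,j,k)=(0,0,0)$ and $(i,j,k)=(1,1,1)$. So the task is purely computational: rewrite this group equation as the stated congruence over $\mathbb Z_p$.

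First I would put $\alpha^i\beta^j\gamma^k$ into canonical form $b^s a^t$. Using $\alpha^i=b^i$, expanding $\beta^j=(ab)^j=b^j a^{r(r^j-1)/(r-1)}$ by~\eqref{lem:pq:abt}, observing that $\gamma=(bab)^{-1}$ so that by~\eqref{eqn:bab} $\gamma^k=\bigl((bab)^k\bigr)^{-1}=a^{-r(r^{2k}-1)/(r^2-1)}\,b^{-2k}$, and finally collecting powers of $b$ via~\eqref{eqn:pq:3}, one obtains
\[
\alpha^i\beta^j\gamma^k=b^{\,i+j-2k}\,a^{\,t},\qquad
t=\left(\frac{r(r^j-1)}{r-1}-\frac{r(r^{2k}-1)}{r^2-1}\right)r^{-2k}.
\]
Because $\langle a\rangle\cap\langle b\rangle=1$, this product equals $1$ exactly when $i+j\equiv 2k\pmod q$ and $t\equiv 0\pmod p$. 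Now $q$ is an odd prime, so $2$ is invertible modulo $q$; hence the first condition is equivalent to $k\equiv 2^{-1}(i+j)\pmod q$, which recovers $k$ uniquely from $(i,j)$. Substituting $r^{2k}\equiv r^{i+j}\pmod p$ into $t\equiv 0$, cancelling the nonzero factor $r^{-2k}$ and then $r$, and multiplying through by $(r^2-1)=(r-1)(r+1)$, the congruence $t\equiv 0$ collapses to $(r^j-1)(r+1)\equiv r^{i+j}-1\pmod p$, i.e. $r^{j+1}+r^j\equiv r^{i+j}+r\pmod p$, and dividing by $r$ gives $r^j+r^{j-1}\equiv r^{i+j-1}+1\pmod p$, which is the displayed equation.

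Finally I would note that these manipulations set up a bijection between solutions $(i,j,k)$ (mod $q$) of $\alpha^i\beta^j\gamma^k=1$ and solutions $(i,j)$ (mod $q$) of $r^j+r^{j-1}\equiv r^{i+j-1}+1\pmod p$, with $k$ recovered as $2^{-1}(i+j)$; under this bijection $(0,0,0)\leftrightarrow(0,0)$ and $(1,1,1)\leftrightarrow(1,1)$, and one checks directly that $(0,0)$ and $(1,1)$ do satisfy the displayed congruence. Hence the solution set of $\alpha^i\beta^j\gamma^k=1$ equals $\{(0,0,0),(1,1,1)\}$ precisely when the solution set of the displayed congruence equals $\{(0,0),(1,1)\}$, and the lemma follows from Lemma~\ref{lem:thingroupbitrade}. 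The only real obstacle is the bookkeeping in the canonical-form step — correctly inverting $(bab)^k$ and keeping the geometric-series exponents straight — together with making sure the passage to the two-variable congruence loses no information, which is exactly where invertibility of $2$ modulo $q$ is used.
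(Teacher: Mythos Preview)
Your proof is correct and follows essentially the same approach as the paper: invoke Lemma~\ref{lem:thingroupbitrade}, put $\alpha^i\beta^j\gamma^k$ into normal form using \eqref{lem:pq:abt} and \eqref{eqn:bab}, separate the $b$- and $a$-exponents, and simplify the $a$-exponent congruence to the displayed equation after substituting $r^{2k}=r^{i+j}$. Your version is in fact a little more careful than the paper's in making explicit the bijection $(i,j,k)\leftrightarrow(i,j)$ via $k\equiv 2^{-1}(i+j)\pmod q$ (using $q$ odd), which is exactly what justifies the ``if and only if'' and the passage from three variables to two.
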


\begin{proof}
By Lemma~\ref{lem:thingroupbitrade} 
the latin bitrade is 
{\em not} 
thin if and only if 
$\alpha^i \beta^j \gamma^k = 1$ has a non-trivial 
solution in $i$, $j$, $k$.
In general, we can simplify 
$\alpha^i \beta^j \gamma^k = 1$ as follows:
\begin{alignat*}{3}
&& \alpha^i \beta^j \gamma^k &= 1 \\
&\Leftrightarrow \quad \quad& b^i(ab)^j(b^{-1}a^{-1}b^{-1})^k &= 1 \\
&\Leftrightarrow &  b^{i+j} a^{r(r^j-1)/(r-1)} \left( b^{2k}a^{r(r^{2k}-1)/(r^2-1)} \right)^{-1} &= 1 \\ 
&\Leftrightarrow & b^{i+j} a^{r(r^j-1)/(r-1)} a^{-r(r^{2k}-1)/(r^2-1)} b^{-2k} &= 1 \\ 
&\Leftrightarrow & a^{r(r^j-1)/(r-1)-r(r^{2k}-1)/(r^2-1)} &= b^{2k-(i+j)}.
\end{alignat*}
Since $a$ and $b$ are elements of different prime order, 
$\alpha^i \beta^j \gamma^k = 1$ 
if and only if 
$i+j \equiv 2k \pmod{q}$
and 
\[
\]
\begin{alignat*}{3}
&&\frac{r(r^j-1)}{(r-1)} - \frac{r(r^{2k}-1)}{(r^2-1)} \equiv 0 \pmod{p} \\
&\Leftrightarrow \qquad& r((r^j-1)(r+1) - r^{2k}+1) &\equiv 0 \pmod{p}  \\
&\Leftrightarrow &  r^{j} + r^{j-1} - 1 - r^{2k-1}&\equiv 0 \pmod{p}.
\end{alignat*}
The result follows. 
\end{proof}

An example of a non-thin latin bitrade is 
the case $q=11$, $p=23$ and $r=4$, as $4^5+4^6\equiv 4^9+1 \pmod{23}$.
It is an open problem to predict when the latin bitrade in this subsection is
thin (and indeed, minimal).   
In general, if the ratio $p/q$ is large there seems to be more chance of the latin bitrade being thin. In particular, 
it can be shown that 
if $q=3$ or if $p=r^q-1$, then the latin bitrade is always thin. 

\subsection{The alternating group on $3m+1$ letters}

Let $m\geq 1$ and 
define permutations $a$ and $b$ on the 
set 
$[3m+1] = \{ 1, 2, \ldots, 3m+1 \}$:
\begin{align}
a &= (1, 2, \ldots, 2m+1) \label{eqn11} \\
b &= (m+1, m, \ldots, 1, 2m+2, 2m+3, \ldots, 3m+1) \label{eqn12}
\end{align}
So $\left| \mov(a) \cap \mov(b) \right| = m+1$.
These permutations in fact generate
the alternating group:

\begin{lemma} \label{thm:altgen}
Let $G = \langle a, b \rangle$. Then $G = A_{3m+1}$.
\end{lemma}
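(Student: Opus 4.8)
The plan is to show that the subgroup $G = \langle a, b\rangle$ of $S_{3m+1}$ is transitive, then use a theorem of Jordan (or a direct primitivity-plus-small-support argument) together with the fact that $a$ and $b$ are both even permutations to force $G = A_{3m+1}$. First I would check that $a$ and $b$ are even: $a$ is a $(2m+1)$-cycle, hence even, and $b$ is a $(2m+1)$-cycle on the symbols $\{1,\dots,m+1,2m+2,\dots,3m+1\}$, hence also even; so $G \le A_{3m+1}$ and it suffices to prove the reverse inclusion.

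Next I would establish \emph{transitivity} of $G$ on $[3m+1]$. The cycle $a$ acts transitively on $\{1,\dots,2m+1\}$, and the cycle $b$ has $2m+1$ in its support and reaches every element of $\{2m+2,\dots,3m+1\}$; since $\mov(a)\cap\mov(b) \ni 2m+1$, the orbit of $1$ under $G$ contains all of $\{1,\dots,2m+1\}$ (via $a$) and then, applying $b$, all of $\{2m+2,\dots,3m+1\}$ as well. Hence $G$ is transitive. Then I would upgrade this to \emph{primitivity}: since $|[3m+1]| = 3m+1$, if $3m+1$ is prime, transitivity of prime degree already gives primitivity; in general I would argue that any nontrivial block system is preserved by both cycles $a$ and $b$, and a long cycle of length $2m+1$ on a set of size $3m+1$ cannot preserve a nontrivial block partition of $[3m+1]$ unless the block containing the fixed points $\{2m+2,\dots,3m+1\}$ of $a$ is a union of $a$-orbits, which combined with the analogous constraint from $b$ forces the partition to be trivial. (This is the step I expect to require the most care; alternatively one can avoid primitivity entirely and instead exhibit, by a short explicit computation, a $3$-cycle in $G$, e.g. from a suitable commutator or conjugate such as $a^{-1}b^{-1}ab$ restricted to its support, and then invoke the classical fact that a primitive — or even just transitive — group containing a $3$-cycle and acting on $n$ points is $A_n$ or $S_n$.)

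With primitivity in hand, I would invoke Jordan's theorem: a primitive subgroup of $S_n$ containing a $p$-cycle for a prime $p \le n-3$ contains $A_n$. Taking $p$ to be any prime dividing $2m+1$ with $p \le (3m+1) - 3 = 3m-2$ (which holds whenever $m$ is large enough; the finitely many small $m$ can be checked by hand, and in fact for $p = 2m+1$ itself when $2m+1$ is prime one has $2m+1 \le 3m-2 \iff m \ge 3$), we conclude $A_{3m+1} \le G$. Combined with $G \le A_{3m+1}$ from the parity check, this gives $G = A_{3m+1}$.

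The main obstacle, as noted, is handling the structural step cleanly for \emph{all} $m\ge 1$ rather than asymptotically: either proving primitivity uniformly, or producing a $3$-cycle in $\langle a,b\rangle$ by an explicit word valid for every $m$. I would favour the explicit $3$-cycle route, since once a $3$-cycle is found the transitivity already proved suffices to conclude via the elementary "transitive $+$ $3$-cycle $\Rightarrow$ contains $A_n$" argument (repeatedly conjugating the $3$-cycle by group elements to move its support around and generate all of $A_n$), and the small cases $m=1,2$ can be dispatched by direct inspection.
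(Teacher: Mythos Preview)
Your high-level strategy --- parity check to get $G \le A_{3m+1}$, then primitivity plus a Jordan-type theorem to get $G \ge A_{3m+1}$ --- matches the paper's, but the execution has genuine gaps.

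First a small slip: $2m+1 \notin \mov(b)$, since $\mov(b) = \{1,\dots,m+1\} \cup \{2m+2,\dots,3m+1\}$. Transitivity still follows easily from the correct overlap $\mov(a)\cap\mov(b) = \{1,\dots,m+1\}$, so this is cosmetic.

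The real problems are in both of your proposed endgames. For the Jordan $p$-cycle route: when $2m+1$ is composite with prime divisor $p$, the power $a^{(2m+1)/p}$ is a product of $(2m+1)/p$ disjoint $p$-cycles, not a single $p$-cycle, so Jordan's classical theorem (primitive plus a $p$-cycle with $p \le n-3$) does not apply. You never exhibit a short cycle in $G$. For the fallback route: the claim ``transitive plus a $3$-cycle implies $G \ge A_n$'' is false --- you need primitivity, not mere transitivity (e.g.\ $\langle (1\,2\,3),\,(1\,4)(2\,5)(3\,6)\rangle$ is transitive on six points, contains a $3$-cycle, and has order $18$). And your candidate element is not a $3$-cycle anyway: one computes $[a,b] = (1,\,2m+1)(m+1,\,3m+1)$, a product of two transpositions. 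Finally, your primitivity sketch via block systems is only a heuristic; making it rigorous for all $m$ is not obviously easier than what the paper does.

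The paper closes these gaps as follows. Primitivity comes from $2$-transitivity: the stabiliser $G_1$ contains $ab = (m+1,m+2,\dots,3m+1)$, and the conjugates $v_k = a^{-k}[a,b]\,a^k$ for $k=2,\dots,m$ (each fixing $1$) successively adjoin the points $m,m-1,\dots,2$ to that orbit, so $G_1$ is transitive on $[3m+1]\setminus\{1\}$. In place of Jordan's $p$-cycle theorem the paper invokes Marggraff's theorem (a primitive group on $n$ points with a Jordan complement of size greater than $n/2$ contains $A_n$): for $m \ge 5$ the seven-point set $\Gamma = \{1,m,m+1,m+2,2m+1,2m+2,3m+1\}$ is a Jordan set, witnessed by the explicit elements $t = (1,2m+1,2m+2)(m+1,3m+1,m+2)$ and $u = (m,m+1)(2m+1,3m+1)$ in the pointwise stabiliser of its complement. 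The cases $m \le 4$ are handled by direct verification.
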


To prove the above lemma, we will require some results from the study of
permutation groups.  Relevant definitions can be found in
\cite{Wielandt} and
\cite{DixonMortimer}.

\begin{theorem}[\cite{Wielandt}, p. 19]
\label{thm:wielandt:9.1}
Let $G$ be transitive on $\Omega$ and $\alpha \in \Omega$. Then $G$ is
$(k+1)$-fold transitive on $\Omega$ if and only if $G_{\alpha}$ is
$k$-fold transitive on $\Omega \setminus \alpha$.
\end{theorem}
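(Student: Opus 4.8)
The plan is to prove the equivalence directly from the definitions, translating $(k+1)$-tuples into $k$-tuples and back, while respecting the paper's left-to-right action convention $x\rho$. Note that for $k\ge 1$, $(k+1)$-fold transitivity of $G$ already forces $G$ to be transitive, so the standing hypothesis that $G$ is transitive on $\Omega$ is only genuinely needed for the ``if'' direction.

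First I would handle the ``only if'' direction. Assume $G$ is $(k+1)$-fold transitive on $\Omega$, and let $(\beta_1,\dots,\beta_k)$ and $(\gamma_1,\dots,\gamma_k)$ be two ordered $k$-tuples of pairwise distinct points of $\Omega\setminus\{\alpha\}$. Prepending $\alpha$ gives two ordered $(k+1)$-tuples of pairwise distinct points of $\Omega$, so there is $g\in G$ with $\alpha g=\alpha$ and $\beta_i g=\gamma_i$ for each $i$. The equation $\alpha g=\alpha$ says exactly that $g\in G_\alpha$; hence $G_\alpha$ is $k$-fold transitive on $\Omega\setminus\{\alpha\}$.

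For the ``if'' direction, assume $G$ is transitive on $\Omega$ and $G_\alpha$ is $k$-fold transitive on $\Omega\setminus\{\alpha\}$. Given two ordered $(k+1)$-tuples $(\delta_0,\dots,\delta_k)$ and $(\varepsilon_0,\dots,\varepsilon_k)$ of pairwise distinct points of $\Omega$, use transitivity of $G$ to pick $h,h'\in G$ with $\delta_0 h=\alpha$ and $\varepsilon_0 h'=\alpha$. Because $h$ is a bijection and $\delta_i\ne\delta_0$ for $i\ge 1$, the images $\delta_1 h,\dots,\delta_k h$ are pairwise distinct and all lie in $\Omega\setminus\{\alpha\}$; the same holds for $\varepsilon_1 h',\dots,\varepsilon_k h'$. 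Apply $k$-fold transitivity of $G_\alpha$ to obtain $g\in G_\alpha$ with $(\delta_i h)g=\varepsilon_i h'$ for $i=1,\dots,k$. Then $f:=hg(h')^{-1}\in G$ sends $\delta_0\mapsto\alpha\mapsto\alpha\mapsto\varepsilon_0$ and $\delta_i\mapsto\delta_i h\mapsto\varepsilon_i h'\mapsto\varepsilon_i$ for $i\ge 1$, so $G$ is $(k+1)$-fold transitive on $\Omega$.

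The argument is essentially bookkeeping, and the one step needing care is the verification in the ``if'' direction that, after moving the first coordinates of the two $(k+1)$-tuples onto $\alpha$, the remaining coordinates genuinely form $k$-tuples of pairwise distinct points of $\Omega\setminus\{\alpha\}$ — this is precisely where both the ``distinct points'' clause in the definition of multiple transitivity and the injectivity of $h,h'$ are used. A secondary subtlety is keeping the left-to-right convention consistent, so that $f=hg(h')^{-1}$ acts as $\delta\mapsto((\delta h)g)(h')^{-1}$; composing in the opposite order would swap the roles of $h$ and $h'$ and break the bookkeeping.
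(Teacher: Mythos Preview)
Your proof is correct and is the standard argument for this classical result. Note, however, that the paper does not actually prove this theorem: it is quoted verbatim from Wielandt's book (reference \cite{Wielandt}, p.~19) and used as a black box in the proof of Lemma~\ref{thm:altgen}, so there is no ``paper's own proof'' to compare against. Your write-up would serve perfectly well as a self-contained justification, and your attention to the left-to-right action convention and to the distinctness of the translated tuples is appropriate.
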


We say that $\Gamma \subseteq \Omega$
is a {\em Jordan set} and its complement $\Delta = \Omega \setminus
\Gamma$ a {\em Jordan complement} if $\left| \Gamma \right| > 1$ and
the pointwise stabiliser $G_{(\Delta)}$ acts transitively on $\Gamma$.
The next theorem is a modern version
of a result by B.~Marggraff in 1889.

\begin{theorem}[\cite{DixonMortimer}, Theorem 7.4B, p. 224 ]
\label{thm:jordancomplement}
Let $G$ be a group acting primitively on a finite set $\Omega$ of size
$n$, and suppose that $G$ has a Jordan complement of size $m$, where $m
> n/2$. Then $G \geq A_{\Omega}$.
\end{theorem}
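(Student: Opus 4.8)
Since Theorem~\ref{thm:jordancomplement} is quoted verbatim from \cite{DixonMortimer}, I would only outline the classical argument of Jordan and Marggraff that proves it. Write $\Gamma = \Omega \setminus \Delta$ for the Jordan set, so that $|\Gamma| = n - m < n/2$. The one combinatorial tool I will need is the \emph{union lemma}: if $\Gamma_1$ and $\Gamma_2$ are Jordan sets for $G$ with $\Gamma_1 \cap \Gamma_2 \neq \emptyset$, then $\Gamma_1 \cup \Gamma_2$ is a Jordan set. This I would prove in one line: $\Omega \setminus (\Gamma_1 \cup \Gamma_2)$ is contained in both $\Omega \setminus \Gamma_1$ and $\Omega \setminus \Gamma_2$, so the pointwise stabiliser $G_{(\Omega \setminus (\Gamma_1 \cup \Gamma_2))}$ contains both $G_{(\Omega\setminus\Gamma_1)}$ (transitive on $\Gamma_1$) and $G_{(\Omega\setminus\Gamma_2)}$ (transitive on $\Gamma_2$), and since $\Gamma_1 \cap \Gamma_2 \neq \emptyset$ the subgroup they generate is transitive on $\Gamma_1 \cup \Gamma_2$.

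The first step is to show that $G$ is $2$-transitive. Because $G$ is transitive, each point of $\Omega$ lies in some translate $\Gamma^g$, so the translates of $\Gamma$ cover $\Omega$; were they pairwise equal or disjoint they would form a block system with block size strictly between $1$ and $n$, contradicting primitivity. Hence two distinct translates overlap, and the union lemma yields a strictly larger Jordan set. I would iterate this: at every stage the current Jordan set either grows or its translates form a block system forbidden by primitivity. The hypothesis $|\Gamma| < n/2$ keeps the unions produced proper while the Jordan sets are small, and the borderline regime (Jordan sets of size close to $n/2$) is pushed through using the companion fact that the intersection of two Jordan sets whose union is $\Omega$ is again a Jordan set --- this is the part I would simply import from \cite{DixonMortimer}. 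The outcome is a Jordan set of size $n-1$, whose complement is a single point $\delta$; then $G_\delta$ is transitive on $\Omega \setminus \{\delta\}$, so $G$ is $2$-transitive by Theorem~\ref{thm:wielandt:9.1}.

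The second step is an induction on $n$. Fix $\alpha \in \Delta$ and put $H = G_\alpha$ acting on $\Omega' = \Omega \setminus \{\alpha\}$, of size $n' = n-1$. Since $G$ is $2$-transitive, $H$ is transitive on $\Omega'$, and $\Delta' = \Delta \setminus \{\alpha\}$ is a Jordan complement for $H$: an element of $H$ fixing $\Delta'$ pointwise also fixes $\alpha$, hence all of $\Delta$, so $H_{(\Delta')} = G_{(\Delta)}$, which is transitive on $\Gamma = \Omega' \setminus \Delta'$ (and $|\Gamma| = n - m \geq 2$ since $m \leq n-2$). Here $|\Delta'| = m-1$; when $n$ is even, $m \geq n/2 + 1$ gives $m-1 > (n-1)/2 = n'/2$, so the inductive hypothesis applies to $H$ and gives $A_{\Omega'} \leq H \leq \mathrm{Sym}(\Omega)$. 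As $G$ is transitive on $\Omega$ it is not contained in the point stabiliser $H$, so $|G|$ is divisible by $n \cdot |A_{\Omega'}| = n!/2$, whence $G \geq A_\Omega$. For the base cases the hypotheses are vacuous for $n \leq 4$; and for $n = 5$, $m = 3$ the group $G_{(\Delta)}$ induces a transposition on the $2$-element set $\Gamma$, and a primitive group of degree $5$ containing a transposition is $S_5$.

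The hard part is making the descent in the second step legitimate. Point stabilisers of primitive groups need not be primitive, so before invoking the inductive hypothesis on $H$ one must establish that $H$ is again primitive --- this is where the $2$-transitivity obtained in Step~1 must be strengthened, and it is the technical core of §7.4 of \cite{DixonMortimer}. In addition, for odd $n$ with $m = (n+1)/2$ the inequality $m-1 > (n-1)/2$ just fails, so this balanced case (where $|\Gamma| = |\Delta| - 1$) needs separate treatment, for instance by descending two points of $\Delta$ at once or by a direct counting argument. I would take the resolution of both issues --- preservation of primitivity under the descent, and the balanced case --- from \cite{DixonMortimer}, since it is precisely the theory of minimal Jordan sets developed there.
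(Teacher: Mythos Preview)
The paper does not prove this theorem at all: it is quoted as Theorem~7.4B of \cite{DixonMortimer} and used as a black box in the proof of Lemma~\ref{thm:altgen}. There is therefore no ``paper's own proof'' to compare your proposal against. Your outline of the Jordan--Marggraff argument is a reasonable sketch of the standard proof, and you are right to flag the two genuine technical obstacles (preservation of primitivity when passing to a point stabiliser, and the balanced case $m=(n+1)/2$) and to defer their resolution to \cite{DixonMortimer}; but for the purposes of this paper no such outline is needed, since the result is simply cited.
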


\begin{figure}
\begin{center}
{\includegraphics{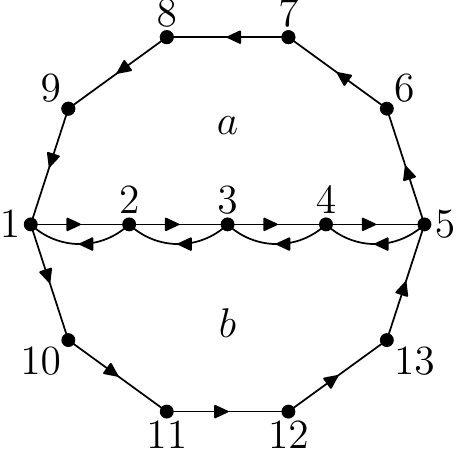}}
\end{center}
\caption{Generators $a$ and $b$ for $A_{3m+1}$ where $m=2$.}
\end{figure}

Here are a few important elements of the
group $G = \langle a, b \rangle$:
\begin{align*}
r &= [a,b] = aba^{-1}b^{-1} = (1,2m+1)(m+1,3m+1), \\
s &= (ab)^{-1}r(ab) = (1, 2m+2) (m+1, m+2), \\
t &= r s = (1, 2m+1, 2m+2) (m+1, 3m+1, m+2), \\
v_k &= a^{-k} r a^{k} = (1 a^k,(2m+1) a^k)((m+1) a^k, 3m+1 ), \\
u &= v_m = a^{-m} r a^{m} = (m, m+1) (2m+1, 3m+1).
\end{align*}

\begin{lemma}
The group $G = \langle a, b \rangle$ is primitive.
\end{lemma}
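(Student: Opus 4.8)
The plan is to verify the two ingredients of primitivity: transitivity, and the non-existence of a nontrivial block system. Transitivity is immediate — $\langle a\rangle$ is already transitive on $\{1,\dots,2m+1\}$, and $b$ sends $1$ to $2m+2$ and cyclically permutes $2m+2,\dots,3m+1$, so the $G$-orbit of $1$ is all of $[3m+1]$. The real work is ruling out blocks, and for this I would exploit that $a$ is a single $(2m+1)$-cycle fixing exactly the $m$ points $2m+2,\dots,3m+1$, together with the two trivial numerical facts $2m+1>(3m+1)/2$ and $\gcd(2m+1,m)=1$.

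Concretely, suppose $\mathcal{B}$ is a system of blocks of common size $d$ with $1<d<3m+1$; then $d\mid 3m+1$, so $d$ is at most the largest proper divisor of $3m+1$, hence $d\le (3m+1)/2<2m+1$. The key step is a dichotomy: if a block $B$ contains a point $p$ fixed by $a$, then $Ba$ and $B$ share $p$ and so $Ba=B$; but $B\cap\{1,\dots,2m+1\}$ is then invariant under $a$, which cyclically permutes $\{1,\dots,2m+1\}$, so it is either empty or all of $\{1,\dots,2m+1\}$, and the latter is impossible since $d<2m+1$. Thus every block meeting the $a$-fixed set $\{2m+2,\dots,3m+1\}$ is contained in it, whence this $m$-element set is a union of blocks and $d\mid m$. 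Consequently any block meeting $\{1,\dots,2m+1\}$ must lie inside it (otherwise it would also meet the $a$-fixed set and be forced disjoint from $\{1,\dots,2m+1\}$, a contradiction), so $\{1,\dots,2m+1\}$ is a union of blocks and $d\mid 2m+1$. Therefore $d\mid\gcd(2m+1,m)=1$, contradicting $d>1$; hence $G$ has no nontrivial blocks and is primitive.

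The only point demanding care is the block-versus-fixed-point dichotomy above and the boundary case $m=1$ (where the $a$-fixed set is just the single point $3m+1=4$); the inequalities $2m+1>(3m+1)/2$ and $\gcd(2m+1,m)=1$ are routine. I do not expect to need the explicit elements $r,s,t,u,v_k$ for this lemma — they are there for the subsequent step, where primitivity is fed into Marggraff's theorem (Theorem~\ref{thm:jordancomplement}) to conclude $G=A_{3m+1}$.
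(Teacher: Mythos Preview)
Your proof is correct and takes a genuinely different route from the paper's. The paper establishes primitivity by proving the stronger fact that $G$ is $2$-transitive: it shows that the point stabiliser $G_1$ is transitive on $[3m+1]\setminus\{1\}$ by observing that $ab=(m+1,m+2,\dots,3m+1)\in G_1$ already handles the points $m+1,\dots,3m+1$, and then using the conjugates $v_k=a^{-k}[a,b]\,a^{k}$ (which lie in $G_1$ for $2\le k\le m$) to successively pull the points $m,m-1,\dots,2$ into the same $G_1$-orbit.

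Your argument, by contrast, never touches the auxiliary elements $r,s,t,u,v_k$ and uses only the cycle shape of $a$: any block meeting $\mathrm{Fix}(a)=\{2m+2,\dots,3m+1\}$ is $a$-invariant, hence disjoint from the single $a$-cycle $\{1,\dots,2m+1\}$ (since $d<2m+1$), forcing $d\mid m$ and $d\mid 2m+1$ simultaneously, whence $d=1$. This is more elementary and self-contained; the paper's approach has the incidental benefit of yielding $2$-transitivity, but that extra strength is not actually needed downstream (Marggraff's theorem only requires primitivity). Your remark that the elements $r,s,t,u$ are reserved for the Jordan-complement step is exactly right. One tiny phrasing point: $b$ does not ``cyclically permute $2m+2,\dots,3m+1$'' among themselves (since $(3m+1)b=m+1$), but your transitivity claim is unaffected---the $\langle b\rangle$-orbit of $1$ still contains $2m+2,\dots,3m+1$.
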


\begin{proof}
Consider the subgroup $G_1 = \{ g \in G \mid 1g = 1 \}$.
The product
\[
    ab = (m+1, m+2, \ldots, 2m+1, 2m+2, \ldots, 3m+1)
\]
is in $G_1$ so $G_1$ is transitive on 
$M=\{m+1,m+2,\dots, 3m+1\}$.
For $m=1$, $G$ is obviously $2$-transitive.
Otherwise suppose that $m > 1$. 

Since $v_m = (m+1, m)(2m+1, 3m+1)$ we see that $G_1$ is transitive on 
$M' = M \cup \{ m \}$. Using $v_{m-1}$, $v_{m-2}, \ldots, v_2 = (3,2) (m+3, 3m+1)$, 
all of which are in $G_1$, shows that $G_1$
is transitive on $[3m+1] \setminus \{ 1 \}$. By 
Theorem~\ref{thm:wielandt:9.1} with $k=1$, $G$ is $2$-transitive, and hence
primitive.
\end{proof}

\begin{proof}[Proof of Lemma~\ref{thm:altgen}]
The cases $m=1$, $2$, $3$, and $4$ can be checked by explicitly constructing an
isomorphism from $G$ to $A_{3m+1}$.  So we assume that $m \geq
5$ and define a Jordan set
\[
\Gamma = \{ 1,\, m,\, m+1,\, m+2,\, 2m+1,\, 2m+2,\, 3m+1 \} 
\]
and its complement $\Delta = [3m+1] \setminus \Gamma$. Then
$\left| \Delta \right| = 3m-6 > (3m+1)/2$ when $m \geq 5$ so $\Delta$ is large enough.  
The pointwise stabiliser
$G_{(\Delta)}$ consists of all group elements $g$ such that
$xg = x$ for $x \in \Delta$. Hence $t$ and $u$ are in $G_{(\Delta)}$ so
$G_{(\Delta)}$ is transitive on
$\Gamma$.  By
Theorem~\ref{thm:jordancomplement}, $G \geq A_{\Omega}$. 
Lastly, an odd cycle can be written as a product of an even number of
transpositions so $G \leq A_{3m+1}$.
\end{proof}

\begin{theorem} \label{3m+1} 
Let $a$ and $b$ be given as in Equations~\eqref{eqn11} and \eqref{eqn12}), and 
let $c=ab$,
$\alpha = a$, $\beta = b$, $\gamma = c^{-1}$ be elements of $G=A_{3m+1}$, the alternating group on $3m+1$ elements.
Then $\alpha$, $\beta$, and $\gamma$ 
satisfy conditions {\rm (G1)}, {\rm (G2)} and {\rm (G3)} of Theorem~$\ref{main}$. 
Thus for each $m \geq 1$, there exists a primary $(2m+1)$-homogeneous latin bitrade of size $(3m+1)!/2$
given by 
\[(
\{
(g\langle \alpha \rangle,
g\langle \beta \rangle,
g\langle \gamma \rangle) \mid g\in G \}, 
\{
(g\langle \alpha \rangle,
g\langle \beta \rangle,
g\alpha^{-1}\langle \gamma \rangle) \mid g\in G \})\]  
\end{theorem}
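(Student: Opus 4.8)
The plan is to verify conditions {\rm (G1)}, {\rm (G2)}, {\rm (G3)} of Theorem~\ref{main} for the elements $\alpha = a$, $\beta = b$, $\gamma = c^{-1} = (ab)^{-1}$, and then read the conclusions off directly. Condition {\rm (G1)} is immediate, since $\alpha\beta\gamma = ab\,(ab)^{-1} = 1$. Condition {\rm (G3)} is essentially free as well: because $\alpha = a$ and $\beta = b$, we have $\langle \alpha, \beta, \gamma\rangle \supseteq \langle a, b\rangle = A_{3m+1} = G$ by Lemma~\ref{thm:altgen}, so equality holds.

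The real content is {\rm (G2)}, and the key observation is a general fact about cycles. If $\sigma = (x_1, x_2, \ldots, x_n)$ is an $n$-cycle and $1 \le k < n$, then $x_i \sigma^k = x_{i+k}$ with subscripts read modulo $n$, which equals $x_i$ only when $n \mid k$; hence every nontrivial power of an $n$-cycle moves exactly the same set of points as the cycle itself. Applying this to $A = \langle a\rangle$, $B = \langle b\rangle$ and $C = \langle ab\rangle$, each of which is generated by a $(2m+1)$-cycle (for $m \ge 1$ these are genuine nontrivial cycles, so $a$, $b$, $ab$ are non-identity as required by Definition~\ref{tdef}), we see that every nontrivial element of $A$, $B$, $C$ has moved-point set equal to $\mov(a)$, $\mov(b)$, $\mov(ab)$ respectively. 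Consequently, a nontrivial element of $A \cap B$ would force $\mov(a) = \mov(b)$; but
\[
\mov(a) = \{1, \ldots, 2m+1\}, \qquad
\mov(b) = \{1, \ldots, m+1\} \cup \{2m+2, \ldots, 3m+1\},
\]
and these differ (for every $m \ge 1$ the point $m+2$ lies in the first set but not the second), a contradiction. Using the computation $ab = (m+1, m+2, \ldots, 3m+1)$ carried out above --- whence $\mov(ab) = \{m+1, \ldots, 3m+1\}$ --- the same argument rules out nontrivial elements of $A \cap C$ and of $B \cap C$, since the point $1$ lies in $\mov(a)$ and in $\mov(b)$ but not in $\mov(ab)$. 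Hence $|A \cap B| = |A \cap C| = |B \cap C| = 1$ and {\rm (G2)} holds.

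With {\rm (G1)}--{\rm (G3)} in hand, Theorem~\ref{main} produces a primary latin bitrade of the stated form whose size is $|G| = |A_{3m+1}| = (3m+1)!/2$. Since $a$, $b$, and $ab$ are all $(2m+1)$-cycles, $|A| = |B| = |C| = 2m+1$, so the bitrade is $(2m+1)$-homogeneous by the lemma characterising $k$-homogeneous bitrades constructed from groups. Given how much of the work is done by Theorem~\ref{main} and Lemma~\ref{thm:altgen}, there is no serious obstacle here; the only point demanding care is the elementary bookkeeping showing that the three support sets $\mov(a)$, $\mov(b)$, $\mov(ab)$ are pairwise distinct for all $m \ge 1$, which as above follows by exhibiting a point lying in one set but not another.
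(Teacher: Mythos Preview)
Your proof is correct and follows essentially the same approach as the paper. Both dispatch {\rm (G1)} and {\rm (G3)} immediately (the latter via Lemma~\ref{thm:altgen}), and both handle {\rm (G2)} by exploiting that a nontrivial power of a $(2m+1)$-cycle has the same support as the cycle itself, then exhibiting a point lying in one support but not another; the paper picks specific witness points ($3m+1$ and $1$) directly, while you first package the observation as ``$\mov(a)$, $\mov(b)$, $\mov(ab)$ are pairwise distinct'' before naming the witnesses, but this is only a cosmetic difference.
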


\begin{proof}
Clearly (G1) holds. Next we verify (G2). 
Suppose that  $\langle \alpha \rangle
\cap \langle \beta \rangle \neq 1$. Then
$a^i = b^j$ for some $i$, $j$. In particular,
$(3m+1)a^i  = (3m+1)b^j$.
Since 
$3m+1 \in \fix(a)$, it must be that
$(3m+1)b^j = 3m+1$, so $j \equiv 0 \pmod{2m+1}$. Then $a^i = 1$ so 
$i \equiv j \equiv 0 \pmod{2m+1}$, a contradiction.
By considering the action on the point $1$ we can also show that
$\langle \alpha \rangle \cap \langle \gamma \rangle \neq 1$
and
$\langle \beta \rangle \cap \langle \gamma \rangle \neq 1$.
Lastly (G3) is given by Lemma~\ref{thm:altgen}.
\end{proof}

\begin{lemma}
The latin bitrade constructed in Theorem~$\ref{3m+1}$ is thin.
\end{lemma}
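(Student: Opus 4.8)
The plan is to invoke Lemma~\ref{lem:thingroupbitrade}: the latin bitrade of Theorem~\ref{3m+1} is thin if and only if the only solutions of $\alpha^i\beta^j\gamma^k = 1$, with $i,j,k$ reduced modulo $|\langle\alpha\rangle| = |\langle\beta\rangle| = |\langle\gamma\rangle| = 2m+1$, are $(i,j,k) = (0,0,0)$ and $(i,j,k)=(1,1,1)$. Since $\alpha = a$, $\beta = b$ and $\gamma = c^{-1} = (ab)^{-1}$, this equation is equivalent to $a^i b^j = (ab)^k$, and I will prove directly — using the explicit cycle structures of $a$, $b$ and $ab$ — that any such identity forces one of the two allowed triples.

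The key tool is the fixed-point sets. Recall $ab = (m+1,m+2,\dots,2m+1,2m+2,\dots,3m+1)$ is a single $(2m+1)$-cycle, so it fixes every point of $\{1,2,\dots,m\}$; hence for any solution the permutation $a^i b^j$ must fix each of $1,\dots,m$, in particular $1$ and $m$. Using that $a = (1,2,\dots,2m+1)$ fixes $\{2m+2,\dots,3m+1\}$ and that $\fix(b) = \{m+2,\dots,2m+1\}$, I compute (representatives in $\{1,\dots,2m+1\}$) that $1a^i = 1+i$. If $i\in\{m+1,\dots,2m\}$ then $1a^i\in\{m+2,\dots,2m+1\} = \fix(b)$, so $a^ib^j$ sends $1$ to $1+i\neq 1$, contradicting that it fixes $1$; hence $i\in\{0,1,\dots,m\}$. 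Next, for $i\in\{2,\dots,m\}$ one has $ma^i = m+i\in\{m+2,\dots,2m\}\subseteq\fix(b)$, so $a^ib^j$ moves $m$, again a contradiction. Therefore $i\in\{0,1\}$. (For $m=1$ the set $\{2,\dots,m\}$ is empty, so this step is vacuous and the conclusion $i\in\{0,1\}$ already follows from the previous one.)

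It then remains to pin down $j$ and $k$. If $i = 0$, then $b^j$ must fix $1$; since $b$ is a $(2m+1)$-cycle through $1$, this gives $j\equiv 0$, whence $(ab)^k = 1$ and $k\equiv 0$, i.e.\ the triple $(0,0,0)$. If $i = 1$, then $ab^j$ must fix $1$, i.e.\ $2b^j = 1$; since $2b = 1$ in the cycle $b$, this forces $j\equiv 1$, whence $(ab)^k = ab$, so $(ab)^{k-1} = 1$ and $k\equiv 1$, i.e.\ the triple $(1,1,1)$. Thus the only solutions to $\alpha^i\beta^j\gamma^k = 1$ are $(0,0,0)$ and $(1,1,1)$, and Lemma~\ref{lem:thingroupbitrade} yields thinness (and hence minimality, since the bitrade is primary by Theorem~\ref{3m+1}).

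No deep group theory is needed: all of $a$, $b$, $ab$ are explicit $(2m+1)$-cycles, so the entire argument is a bookkeeping exercise with three fixed-point sets and index ranges modulo $2m+1$. The step that carries the most weight — and the only one that is not immediate — is eliminating the ``middle'' exponents $i\in\{2,\dots,m\}$, and this is precisely where testing the point $m$ (rather than only the point $1$) is essential; everything else is routine verification.
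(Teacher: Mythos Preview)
Your proof is correct and follows essentially the same approach as the paper's: both arguments use that $(ab)^k$ fixes every point of $\{1,\dots,m\}$, so $a^ib^j$ must too, and then exploit the explicit cycle structure of $a$ and $b$ to force $i\in\{0,1\}$ and subsequently pin down $j$ and $k$. The paper states the constraint ``$xa^i\in\{1,\dots,m+1\}$ for all $x\in\{1,\dots,m\}$'' and jumps directly to $i\equiv 0$ or $1$, whereas you make the elimination explicit by testing the two extremal points $x=1$ and $x=m$; this is a minor expository difference, not a different method.
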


\begin{proof}
Let $x$ be a point in $X = \{ 1, 2, \ldots, m \}$. Since $c^k$
fixes $x$, we have $x a^i b^j = x$ for $x \in X$.
Then it must be that $x a^i \in \{ 1, 2, \ldots, m+1 \}$ otherwise $b^j$
will be unable to map $x a^i$ onto $x$. 
Thus $i\equiv 0$ or $1 \pmod{2m+1}$. If $i\equiv 0 \pmod{2m+1}$, then
 we must have $xb^j=x$ for each $x\in X$. 
Thus $j\equiv 0 \pmod{2m+1}$ which in turn implies that $k\equiv 0 \pmod{2m+1}$.
Otherwise $i\equiv 1 \pmod{2m+1}$, which similarly implies that 
$j\equiv k\equiv 1 \pmod{2m+1}$.
\end{proof}

\begin{lemma}
The latin bitrade constructed in Theorem~$\ref{3m+1}$ is
orthogonal.
\end{lemma}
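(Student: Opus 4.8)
The plan is to reduce the claim, via Lemma~\ref{lem:orthog}, to a statement purely about the cyclic subgroup $C=\langle\gamma\rangle$ and its conjugate by $\alpha$. Concretely, Lemma~\ref{lem:orthog} says the bitrade of Theorem~\ref{3m+1} is orthogonal if and only if $|C\cap C^{\alpha}|=1$, where $C^{\alpha}=\alpha^{-1}C\alpha$. Since $\gamma=c^{-1}$ with $c=ab$, we have $C=\langle c\rangle$, and because $c=ab=(m+1,m+2,\ldots,3m+1)$ is a single $(2m+1)$-cycle, $|C|=2m+1$ and every nonidentity element $c^k$ of $C$ has support exactly $\mov(c)=\{m+1,m+2,\ldots,3m+1\}=:M$ (a nontrivial power of one full $n$-cycle fixes no point of that cycle's support, regardless of whether $n$ is prime).

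Next I would compute the support of a conjugated power. Since $\alpha=a=(1,2,\ldots,2m+1)$ sends $m+1\mapsto m+2\mapsto\cdots\mapsto 2m+1\mapsto 1$ and fixes $2m+2,\ldots,3m+1$, the image set $Ma$ equals $(M\setminus\{m+1\})\cup\{1\}$. In particular $Ma\neq M$: the point $m+1$ lies in $M$ but not in $Ma$, while $1$ lies in $Ma$ but not in $M$ (note $1\notin M$ and $m+1\notin Ma$ already for all $m\geq 1$). For any $k\not\equiv 0\pmod{2m+1}$, a point $x$ is moved by $\alpha^{-1}c^k\alpha$ exactly when $xa^{-1}\in\mov(c^k)=M$, i.e.\ when $x\in Ma$; hence $\mov(\alpha^{-1}c^k\alpha)=Ma$.

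Finally I would run the contradiction argument. Suppose $x\in C\cap C^{\alpha}$ with $x\neq 1$. On one hand $x=c^k$ for some $k\not\equiv 0\pmod{2m+1}$, so $\mov(x)=M$; on the other hand $x=\alpha^{-1}c^{\ell}\alpha$ for some $\ell\not\equiv 0\pmod{2m+1}$, so $\mov(x)=Ma$. This forces $M=Ma$, contradicting the previous paragraph. Therefore $C\cap C^{\alpha}=\{1\}$, so $|C\cap C^{\alpha}|=1$ and the bitrade is orthogonal by Lemma~\ref{lem:orthog}.

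I do not expect a genuine obstacle here; the construction has been arranged so that $\alpha=a$ drags the support of $c$ off itself. The only spots needing a little care are justifying that every nontrivial power of the $(2m+1)$-cycle $c$ has full support $M$ (so that the "support" invariant really distinguishes elements of $C$ from elements of $C^{\alpha}$), and checking the small-$m$ boundary, both of which are immediate from the explicit forms of $a$ and $b$.
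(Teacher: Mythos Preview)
Your proof is correct and takes essentially the same approach as the paper's: both invoke Lemma~\ref{lem:orthog} to reduce to $|C\cap C^{\alpha}|=1$ and then exploit that $c=ab$ is the $(2m+1)$-cycle on $M=\{m+1,\ldots,3m+1\}$ while $a$ moves $M$ off itself. The paper evaluates a hypothetical equality $c^{i}=a^{-1}c^{j}a$ at the single point $1\in Ma\setminus M$ (forcing $(2m+1)c^{j}=2m+1$ and hence $j\equiv 0$), whereas you compare the full supports $M$ and $Ma$; this is the same observation in slightly different packaging.
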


\begin{proof}
From Lemma \ref{lem:orthog},
the latin bitrade is orthogonal if and only if $|C \cap C^a| = 1$. So suppose that
$c^i = a^{-1} c^j a$
for some integers $i$, $j$. Then
\[
(1)c^i = 1 = (1) a^{-1} c^j a  =  (2m+1) c^j a.
\]
For $(2m+1) c^j a  = 1$ we must have $(2m+1) c^j = 2m+1$ so
$j \equiv 0 \pmod{2m+1}$ which also
implies that $i \equiv 0 \pmod{2m+1}$. Hence $|C \cap C^a| = 1$ as
required.
\end{proof}

\begin{example}
Letting $m=1$, we construct a thin, orthogonal latin bitrade of size $12$ as in this subsection.
Here $a=(123)$, $b=(214)$ and $c = (243)$.
We use the following cosets of $A= 
\langle a \rangle$ 
$B=\langle b \rangle$ and 
$C=\langle c \rangle$ within the alternating group $A_{4}$: 
\[
\begin{array}{ll}
\begin{array}{rl}
A &= \{ 1, (123), (132) \} \\
cA &= \{ (243), (124), (13)(24) \} \\
c^{-1}A &= \{ (234), (12)(34), (134) \}\\
bA &= \{ (142), (143), (14)(23) \} 
\end{array}
\quad \quad &
\begin{array}{rl}
B &= \{ 1, (124), (142) \} \\
aB &= \{ (123), (14)(23), (234) \} \\
a^{-1} B &= \{ (132), (134), (13)(24) \} \\
cB &= \{ (243), (12)(34), (143) \} 
\end{array} \\
& \\
\begin{array}{rl}
C &= \{ 1, (234), (243) \} \\
aC &= \{ (123), (13)(24), (143) \} \\
\end{array} & 
\begin{array}{rl}
a^{-1} C &= \{ (132), (142), (12)(34) \} \\
b^{-1} C &= \{ (124), (134), (14)(23) \} 
\end{array}  
\end{array}\]

\[
T^{\circ}\!=
\begin{tabular}{|c||c|c|c|c|}
\hline $\circ$ & $B$ & $aB$ & \!$a^{-1}B$\! & \!$cB$\! \\
\hline 
\hline $A$ & $C$ & $aC$ & \!$a^{-1} C$\! & \\
\hline $cA$ & $b^{-1}C$ & & \!$aC$\! & \!$C$\! \\
\hline \!$c^{-1} A$\! & & $C$ & $b^{-1}C$ & $a^{-1}C$  \\
\hline $bA$ & $a^{-1} C$ & \!$b^{-1} C$\! & & $aC$  \\
\hline
\end{tabular} 
\ \ T^{\star}\!= 
\begin{tabular}{|c||c|c|c|c|}
\hline $\circ$ & $B$ & $aB$ & \!$a^{-1}B$\! & \!$cB$\! \\
\hline 
\hline $A$ & $a^{-1}C$ & $C$ & \!$aC$\! & \\
\hline $cA$ & $C$ & & \!$b^{-1}C$\! & \!$aC$\! \\
\hline \!$c^{-1} A$\! & & $b^{-1}C$ & $a^{-1}C$ & $C$  \\
\hline $bA$ & $b^{-1} C$ & \!$aC$\! & & $a^{-1}C$  \\
\hline
\end{tabular} 
\]
\end{example}

\section{Minimal $k$-homogeneous latin bitrades}

Table~\ref{tableMinimalTrades} lists the sizes of the smallest minimal
$k$-homogeneous latin bitrades, where $k$ is odd and $3\leq k\leq 11$.
We give the smallest such sizes for each of Theorems \ref{p3},
\ref{pq} and \ref{3m+1}, comparing these to the smallest sizes given by
Lemma 17 and Table 2 of \cite{CaDoYa}. It is known that the smallest
possible size of a minimal $3$-homogeneous bitrade is $12$; in any case
the final column gives the smallest known example in the literature.
When applying Theorem \ref{pq} we use Lemma \ref{isitthin} to verify
that the latin bitrade is thin and thus minimal.  
For arbitrary $k$ (including even values), \cite{CaDoYa} gives
the construction of minimal, $k$-homogeneous latin bitrades of size
$\lceil1.75k^2+3\rceil k$. (This paper also improves this bound for small values of $k$.) 
If $k$ is prime then Theorem~\ref{p3} improves this
result.
\begin{table}[h]\label{tableMinimalTrades}
\[\begin{array}{|c|l|l|l|l|l|}
\hline
k & \textnormal{\rm{Thm }} \ref{p3} & \textnormal{\rm{Thm }} \ref{pq} &
\textnormal{\rm{Thm }} \ref{3m+1} & \cite{CaDoYa} & \textnormal{\rm{Smallest known}}\\
\hline
\hline
3 & 27 & 21\ (p=7,q=3,r=2) & 12 & 21 & 12  \\
\hline
5 & 125 & 55\ (p=11,q=5,r=3) & 2520 & 75 & 55 \\
\hline
7 & 343 & 203\ (p=29, q=7,r=7) & 1814400 & 133 & 133 \\
\hline
9 & \textnormal{\rm{N/A}} & \textnormal{\rm{N/A}} & 3113510400 & 243 & 243 \\
\hline
11 & 1331 & 737\ (p=67, q=11, r=14) & 16!/2 & 407 & 407 \\
\hline 
\end{array}\]
\caption{{Sizes of minimal $k$-homogeneous latin bitrades.}} 
\end{table}

\end{document}